\newtheorem{theorem}{Theorem}[section]
\newtheorem{corollary}{Corollary}[theorem]
\newtheorem{proposition}[theorem]{Proposition}
\newtheorem{lemma}[theorem]{Lemma}
\newtheorem{definition}[theorem]{Definition}
\theoremstyle{definition}
\newtheorem{remark}[theorem]{Remark}
\theoremstyle{definition}
\newtheorem{example}[theorem]{Example}
\DeclareMathOperator\supp{supp}
\let\svthefootnote\thefootnote
\newcommand\freefootnote[1]{%
  \let\thefootnote\relax%
  \footnotetext{#1}%
  \let\thefootnote\svthefootnote%
}
\title[Clark measures for product functions and multiplicative embeddings]{Clark measures on polydiscs associated to product functions and multiplicative embeddings}
\author{Nell P. Jacobsson}
\begin{document}
\maketitle
\begin{abstract}
We study Clark measures on the unit polydisc, giving an overview of recent research and investigating the Clark measures of some new examples of multivariate inner functions. In particular, we study the relationship between Clark measures and multiplication; first by introducing compositions of inner functions and multiplicative embeddings, and then by studying products of one-variable inner functions. 
\end{abstract}


\section{Introduction}
\label{sec:intro}

\freefootnote{\textit{Date:} September 8, 2023.}
\freefootnote{2020 \textit{Mathematics Subject Classification.} 28A25, 28A35 (primary); 32A10, 30J05 (secondary).}
\freefootnote{\textit{Key words and phrases.} Clark measure, multiplicative embedding, product function.}


Let 
$$ \mathbb{D}^d := \{ (z_1, z_2, \ldots, z_d) \in \mathbb{C}^d: |z_j| < 1, \quad j = 1,2,\ldots, d \}$$
denote the unit polydisc in $d$ variables, and 
$$ \mathbb{T}^d := \{ (\zeta_1, \zeta_2, \ldots, \zeta_d) \in \mathbb{C}^d: |\zeta_j| = 1, \quad j = 1,2,\ldots, d \}$$
its distinguished boundary. Note that this is only a subset of the boundary $\partial \mathbb{D}^d$. For $d=2$, the set $ \mathbb{T}^2$ defines a two-dimensional torus. 

For $z \in \mathbb{D}^d$ and $\zeta \in \mathbb{T}^d$, we introduce the Poisson kernel on $\mathbb{D}^d$ as a product of one-variable Poisson kernels:
\begin{align*}
    P_z( \zeta) = P(z, \zeta) := \prod_{j=1}^d P_{z_j}(\zeta_j) \quad \text{where} \quad P_{z_j}( \zeta_j) := \frac{1-|z_j|^2}{|\zeta_j-z_j|^2}. 
\end{align*}
Given a complex Borel measure $\mu$ on $\mathbb{T}^d$, we define its Poisson integral as
$$P[d \mu](z) := \int_{\mathbb{T}^d} P(z,\zeta) d \mu(\zeta), \quad z \in \mathbb{D}^d.
$$

If $\phi: \mathbb{D}^d \to \mathbb{D}$ is a bounded holomorphic function, then
\begin{align*}
    \Re \bigg( \frac{\alpha + \phi(z)}{\alpha- \phi(z)} \bigg) = \frac{1-|\phi(z)|^2}{|\alpha-\phi(z)|^2}
\end{align*}
is positive and pluriharmonic (i.e. locally the real part of an analytic function) on $\mathbb{D}^d$. Hence, by Herglotz' theorem, there exists a unique positive Borel measure $\sigma_\alpha$ on $\mathbb{T}^d$ such that
\begin{align*}
    \frac{1-|\phi(z)|^2}{|\alpha-\phi(z)|^2} = P[d \sigma_\alpha](z) = \int_{\mathbb{T}^d} P(z,\zeta) d \sigma_\alpha(\zeta).
\end{align*}
We call $\{ \sigma_\alpha \}_{\alpha \in \mathbb{T}}$ the \textit{Aleksandrov-Clark measures} associated to $\phi$. A measure whose Poisson integral is the real part of an analytic function is generally called an RP-measure or pluriharmonic measure. 

Recall that for a function $f: \mathbb{D} \to \mathbb{C}$ and some point $\zeta \in \mathbb{T}$, we say that $f(z)$ approaches $L \in \mathbb{C}$ non-tangentially, denoted
$$ L := \angle \lim_{z \to \zeta} f(z),
$$
if $f(z) \to L$ whenever $z \to \zeta$ in every fixed Stolz domain
\begin{align*}
    \Gamma_\alpha ( \zeta) := \{ z \in \mathbb{D}: |z- \zeta| < \alpha ( 1-|z|) \}, \quad \alpha > 1.
\end{align*}
This notion extends to multivariate functions: let 
$f: \mathbb{D}^d \to \mathbb{C}$ and $\zeta \in \mathbb{T}^d$. We say that $f$ has a non-tangential limit $L$ at $\zeta$ if $f(z) \to L$ as $z \to \zeta$, where each $z_j \to \zeta_j$ non-tangentially in the one-variable sense. 

Let $m_d$ denote the normalized Lebesgue measure on $\mathbb{T}^d$, where we write $m$ instead of $m_1$ when $d=1$. For any bounded holomorphic function $\phi: \mathbb{D}^d \to \mathbb{C}$, Fatou's theorem for polydiscs (see Chapter XVII, Theorem 4.8 in \cite{Zygmund}) ensures that the non-tangential limits
\begin{align*}
    \phi^* ( \zeta) = \angle \lim_{\mathbb{D}^d \ni z \to \zeta} \phi ( z) 
\end{align*}
exist for $m_d$-almost every $\zeta \in \mathbb{T}^d$. We say that $\phi: \mathbb{D}^d \to \mathbb{D}$ is an \textit{inner} function if it is bounded, holomorphic and $|\phi^*(\zeta)| = 1$ for $m_d$-almost every $\zeta \in \mathbb{T}^d$. If $\phi$ is inner, we call $\{ \sigma_\alpha \}_{\alpha \in \mathbb{T}}$ the \textit{Clark measures} of $\phi$ instead.

In one variable, there are detailed descriptions of Clark measures and their behavior, see e.g. Chapter 11 of \cite{Intro}. For instance, it is known that to each inner function on the unit disc, one can associate not only a family of Clark measures, but also a family of unitary operators. In \cite{Doubtsov}, Doubtsov studies the families of Clark measures and operators associated to any given inner function on $\mathbb{D}^d$, and successfully extends some classical Clark theory to the multivariate setting. For an inner function $\phi$ on $\mathbb{D}^d$, we define the model space associated to $\phi$ as
    $$K_\phi := H^2(\mathbb{T}^d) \ominus \phi H^2(\mathbb{T}^d)$$ 
    and operators 
    $$ T_\alpha : K_\phi \to L^2(\sigma_\alpha), \quad \alpha \in \mathbb{T},
    $$
    In \cite{clark_og}, Clark proves these operators to be unitary when $d=1$, and in \cite{Doubtsov}, Doubtsov investigates what happens in higher dimensions. Let 
    $$ C_w(z) := \prod_{j=1}^d \frac{1}{1-z_j \overline{w_j}}, \quad z_j \in \overline{\mathbb{D}}^d, w_j \in \mathbb{D}^d
    $$
    denote the Cauchy kernel in $d$ variables. Doubtsov then defines the operator $T_\alpha$ on reproducing kernels $K_w(z)$ as 
    $$ T_\alpha [K_w](\zeta) := (1- \alpha \overline{\phi(w)}) C_w(\zeta), \quad \zeta \in \mathbb{T}^d, w \in \mathbb{D}^d,
    $$ 
    and later extends this to all functions in $K_\phi$ using density. In his main result (Theorem 3.2, \cite{Doubtsov}), he shows that $T_\alpha$ being a unitary operator is equivalent to the polydisc algebra $A(\mathbb{D}^d)$ being dense in $L^2(\sigma_\alpha)$. This work will focus on the measure-theoretic aspects of Clark theory, but the reader can find notes on Clark embedding operators for rational inner functions in particular in Section 4 of \cite{clark2}. 

    The case of rational inner functions is thoroughly investigated in both \cite{clark2} and \cite{clark1}, where the authors explicitly characterize the Clark measures of general rational inner functions of bidegree $(m,n)$. This work aims to extend this theory to other classes of inner functions, constructed from one-variable functions. In one variable, any inner function can be expressed as the product of a Blaschke product, a monomial and a singular inner function (Theorem 2.14, \cite{Intro}), but there is no such simple general structure known for inner functions in higher dimensions. Hence, it is much harder to say anything about the general case for $d>1$. By studying multivariate functions constructed from one-variable inner functions, we can draw on the one-dimensional properties.

    As an aside, we note that there are two natural settings for multivariate Clark theory; one could either study Clark measures on the unit polydisc $\mathbb{D}^d$ or the unit $d$-ball. Clark theory on the unit $d$-ball has been explored in detail in e.g. \cite{doubtsov_sphere}. We restrict ourselves to $\mathbb{D}^d$ in this work, and mainly dimension $d=2$. 

\subsection{Overview} First, in Section \ref{sec:prel}, we introduce some basic theory concerning Clark measures in $\mathbb{T}^d$. Next, we survey some notable properties of Clark measures in one variable. In Section \ref{sec:rifs}, we give an overview of recent progress concerning rational inner functions (RIFs). In particular, we present results from \cite{clark2} --- for example, we will see that the Clark measures of bivariate RIFs are supported on finite unions of analytic curves, and that one can characterize their behavior along these curves. 

In Section \ref{sec:mult}, we consider the multiplicative embedding
$$ \Phi(z):= \phi(z_1 z_2 \cdots z_d), \quad z \in \mathbb{D}^d
$$
which produces a multivariate inner function given an inner function $\phi$ in one variable. First we investigate the case $d=2$, where we characterize the unimodular level sets of $\Phi$ and see that these can always be parameterized by --- potentially infinitely many --- antidiagonals in $\mathbb{T}^2$. We conclude this section by presenting a concrete structure formula for the Clark measures associated to $\Phi$ in $d$ variables. In Section \ref{sec:prod}, we turn to product functions 
$$ \Psi(z) := \phi(z_1) \psi(z_2), \quad z \in \mathbb{D}^2
$$
for inner functions $\phi$ and $\psi$. We then prove a structure formula for the Clark measures of $\Psi $ under certain assumptions on $\phi$ and $\psi$. 

Throughout the text, we present examples of bivariate inner functions with explicit characterizations of their Clark measures. Finally, in Section \ref{sec: close}, we discuss possible further research tied to our results and raise some open questions. 


\section{Preliminaries}
\label{sec:prel}
\subsection{Elementary Clark theory in polydiscs} If $\phi$ is an inner function with associated Clark measure $\sigma_\alpha$, then
$$ P[d \sigma_\alpha](z) = \frac{1-|\phi(z)|^2}{|\alpha-\phi(z)|^2} = 0 \quad \text{ $m_d$-almost everywhere on $\mathbb{T}^d$}.$$ 
Clearly, the numerator goes to zero $m_d$-almost everywhere. As $\phi - \alpha$ is bounded, it lies in the Hardy space $H^2( \mathbb{T}^d) \subset N(\mathbb{T}^d)$; then Theorem 3.3.5 in \cite{Rudin} states that $\log( \phi^* - \alpha) $ lies in $L^1(\mathbb{T}^d)$. This in turn implies that $\phi^* - \alpha$ must be non-zero $m_d$-almost everywhere on $\mathbb{T}^d$. Hence, $P[d \sigma_\alpha](z) = 0$ $m_d$-almost everywhere on $\mathbb{T}^d$, as asserted. 

A notable consequence of this result is that if $\phi$ is an inner function, then its Clark measures $\{ \sigma_\alpha \}_{\alpha \in \mathbb{T}}$ must be singular with respect to the Lebesgue measure. To see this, we decompose $\sigma_\alpha$ into an absolutely continuous measure $\tau^1_\alpha = f_\alpha m_d$, $f_\alpha \in L^1(\mathbb{T}^d)$, and a $m_d$-singular measure $\tau^2_\alpha$ (see Theorem 6.10, \cite{RC_Rudin}). Then Theorem 2.3.1 in \cite{Rudin} states that the function 
$$u(z):= P[d\sigma_\alpha ](z) = P[f_\alpha dm_d + d\tau^2_\alpha](z) 
$$
satisfies $u^*(\zeta) = f_\alpha(\zeta)$ for $m_d$-almost every $\zeta \in \mathbb{T}^d$. However, we saw already that $P[d\sigma_\alpha] = 0$ $m_d$-almost everywhere on $\mathbb{T}^d$; hence $f_\alpha = 0$ $m_d$-almost everywhere on $\mathbb{T}^d$. We can thus conclude that $\sigma_\alpha$ is a $m_d$-singular measure for each $\alpha \in \mathbb{T}$. Moreover, as asserted in \cite{Doubtsov}, two Clark measures $\sigma_\alpha$ and $\sigma_\beta$ associated to an inner function $\phi$ are mutually singular whenever $\alpha \neq \beta$. 

Observe that since
$$\int_{\mathbb{T}^d} d \sigma_\alpha = \int_{\mathbb{T}^d} P (0,\zeta) d \sigma_\alpha =  \frac{1-|\phi(0)|^2}{|\alpha-\phi(0)|^2} < \infty, $$
the measure $\sigma_\alpha$ is finite for each $\alpha \in \mathbb{T}$. In particular, $\sigma_\alpha$ is a probability measure if the associated inner function $\phi$ satisfies $\phi(0)=0$.

For an inner function $\phi$ and a constant $\alpha \in \mathbb{T}$, we define the so called \textit{unimodular level set}
\begin{align*}
    \mathcal{C}_\alpha( \phi ) := \text{Clos} \Bigl \{ \zeta \in \mathbb{T}^d : \lim_{r \to 1^-} \phi( r \zeta ) = \alpha \Bigl \},
\end{align*}
where the closure is taken with respect to $\mathbb{T}^d$. The following proposition is a generalization of Lemma 2.1 in \cite{clark2}, where it is proven for rational inner functions.
\begin{proposition} Let $\phi : \mathbb{D}^d \to \mathbb{C}$ be an inner function, and let $\alpha$ be a unimodular constant. Then $\supp (\sigma_\alpha) \subset \mathcal{C}_\alpha ( \phi) $.
\label{support}
\end{proposition}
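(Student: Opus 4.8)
The plan is to prove the equivalent statement that $\sigma_\alpha$ assigns no mass to the open set $V := \mathbb{T}^d \setminus \mathcal{C}_\alpha(\phi)$. Since $\sigma_\alpha$ is a positive Borel measure, it suffices to show that $\int_{\mathbb{T}^d} g \, d\sigma_\alpha = 0$ for every continuous $g \geq 0$ with $\supp g \subset V$. The basic tool is that the Poisson kernel is an approximate identity: writing $u(z) := \frac{1-|\phi(z)|^2}{|\alpha - \phi(z)|^2} = P[d\sigma_\alpha](z)$ and using the symmetry $P(r\xi,\zeta) = P(r\zeta,\xi)$ together with Fubini, one obtains $\int_{\mathbb{T}^d} g(\xi) u(r\xi)\, dm_d(\xi) \to \int_{\mathbb{T}^d} g \, d\sigma_\alpha$ as $r \to 1^-$; that is, the measures $u(r\,\cdot)\,dm_d$ converge weak-$*$ to $\sigma_\alpha$. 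Their total masses are all equal to $\sigma_\alpha(\mathbb{T}^d)$, since $\int_{\mathbb{T}^d} P(r\xi,\zeta)\,dm_d(\xi) = 1$. Thus it is enough to control $\int_{\mathbb{T}^d} g(\xi)u(r\xi)\,dm_d(\xi)$ in the limit.

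First I would pin down the pointwise behaviour of the integrand on $\supp g \subset V$. By Fatou's theorem for polydiscs the radial limit $\phi^*(\xi)$ exists for $m_d$-a.e. $\xi$, with $|\phi^*(\xi)| = 1$ because $\phi$ is inner; and since $\xi \notin \mathcal{C}_\alpha(\phi)$, which contains the level set $\{\,\lim_{r\to1}\phi(r\xi) = \alpha\,\}$, we have $\phi^*(\xi) \neq \alpha$ there. Consequently the numerator $1 - |\phi(r\xi)|^2 \to 0$ while the denominator stays bounded away from $0$, so $u(r\xi) \to 0$ for $m_d$-a.e. $\xi \in \supp g$. I would also record the complementary dichotomy coming from the trivial bound $|\alpha - \phi(r\xi)|^2 \leq (1-|\phi(r\xi)|^2)/u(r\xi) \leq 1/u(r\xi)$: at any $\xi$ where $u(r\xi) \to +\infty$ one necessarily has $\phi(r\xi) \to \alpha$, i.e. $\xi \in \mathcal{C}_\alpha(\phi)$.

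The delicate point — and the main obstacle — is justifying the passage to the limit, because $\int_{\mathbb{T}^d} u(r\,\cdot)\,dm_d$ stays constant and positive while $u(r\,\cdot) \to 0$ almost everywhere, so the mass of $\sigma_\alpha$ is carried on an $m_d$-null set and one must rule out that any of it escapes into $V$. The clean way to close the argument is to invoke that the ($m_d$-singular) measure $\sigma_\alpha$ is carried by the set of $\xi$ at which $u$ has radial limit $+\infty$; by the dichotomy above this set is contained in $\{\,\lim_{r\to1}\phi(r\xi)=\alpha\,\} \subset \mathcal{C}_\alpha(\phi)$, hence disjoint from $V$, giving $\sigma_\alpha(V) = 0$. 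Making this rigorous on $\mathbb{T}^d$ is exactly where care is needed, since the differentiation with respect to the product (rectangle) basis underlying the radial Poisson kernel is more delicate than in one variable; I would handle it by appealing to the polydisc boundary-behaviour results of Rudin, or, when a local uniform lower bound $|\alpha - \phi(r\xi)| \geq \delta$ for $\xi$ near a point $\xi_0 \in V$ and $r$ close to $1$ is available, by upgrading the almost-everywhere convergence $u(r\,\cdot)\to 0$ to a dominated one and concluding directly by dominated convergence that $\int_{\mathbb{T}^d} g(\xi) u(r\xi)\,dm_d(\xi) \to 0$. Either route yields $\int_{\mathbb{T}^d} g\,d\sigma_\alpha = 0$, and therefore $\supp(\sigma_\alpha) \subset \mathcal{C}_\alpha(\phi)$.
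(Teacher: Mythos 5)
Your argument is essentially correct, but it closes the proof by a different mechanism than the paper. The paper localizes to a ball $B$ disjoint from the level set, uses the upper bound $\int_B P(r\zeta,\eta)\,d\sigma_\alpha(\eta) \le u(r\zeta)$ together with the Poisson-kernel lower bound on the cubes $D_r(\zeta)$ to show that the upper $d$-dimensional density of the restriction $(\sigma_\alpha)_{|B}$ vanishes at every point, and then invokes a density-implies-zero theorem. You instead work from the side of the measure: using that $\sigma_\alpha \perp m_d$ (established earlier in the paper, so legitimately available), the differentiation theorem for singular measures with respect to the cube basis gives $\sigma_\alpha(D_r(\xi))/|D_r(\xi)| \to \infty$ for $\sigma_\alpha$-a.e.\ $\xi$, hence $u(r\xi)\to\infty$ $\sigma_\alpha$-a.e.\ by the same kernel estimate, and your dichotomy (numerator bounded by $1$, so $u(r\xi)\to\infty$ forces $\phi(r\xi)\to\alpha$) then shows $\sigma_\alpha$ is carried by $\{\lim_{r\to 1}\phi(r\xi)=\alpha\}\subset\mathcal{C}_\alpha(\phi)$. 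This is a clean and complete route once you cite the standard fact that the lower derivate of a singular measure with respect to cubes is $+\infty$ $\mu$-a.e.\ (Besicovitch differentiation; your worry about the "rectangle basis" is not an obstacle here because the radial Poisson kernel concentrates on genuine cubes of side $\sim 1-r$, not arbitrary rectangles), and it actually yields the slightly stronger conclusion that $\sigma_\alpha$ charges the level set itself rather than merely its closure. Two caveats: the weak-$*$ preamble about $u(r\cdot)\,dm_d\to\sigma_\alpha$ is not needed for this route and can be dropped; and your alternative route via dominated convergence should not be relied upon, since a uniform lower bound $|\alpha-\phi(r\xi)|\ge\delta$ near a point of $V$ is generally unavailable -- points outside $\mathcal{C}_\alpha(\phi)$ need not keep $\phi(r\xi)$ away from $\alpha$ along the whole radius, only prevent convergence to $\alpha$.
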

With the exception of some details, the proof uses the same arguments as in \cite{clark2}. We include it for the interested reader. 
\begin{proof} Let $B \subset \mathbb{T}^d$ be an open ball such that $\lim_{r \to 1^-} \phi(r \zeta) \neq \alpha$ for all $\zeta \in B$. Our goal is to show that $\sigma_\alpha ( B ) = 0.$ Recall that the Poisson kernel is positive; hence, 
\begin{align*}
    \int_B P(r \zeta, \eta) d \sigma_\alpha(\eta) \leq  \int_{\mathbb{T}^d} P(r \zeta, \eta) d \sigma_\alpha(\eta) = \frac{1-|\phi(r \zeta)|^2}{|\alpha-\phi(r \zeta)|^2}
\end{align*}
for all $\zeta \in B$ and every $0 \leq r < 1$. We make two observations now: first of all, we note that since $\phi$ is inner, the right-hand side tends to zero for $m_d$-almost every $\zeta \in B$ as $r \to 1^-$. So
\begin{align*}
    \lim_{r \to 1^-} \int_B P(r \zeta, \eta) d \sigma_\alpha(\eta) = 0 \quad \text{$m_d$-almost everywhere in $B$}.
\end{align*}
 Secondly, since $\phi$ is bounded on the unit polydisc and $\phi(r \zeta) \not \to \alpha$ on $B$, we have that 
\begin{align}
    \limsup_{r \to 1^-} \int_B P(r \zeta, \eta) d \sigma_\alpha(\eta) \leq  \limsup_{r \to 1^-}  \frac{1-|\phi(r \zeta)|^2}{|\alpha-\phi(r \zeta)|^2} < \infty
    \label{fin}
\end{align}
for \textit{all} $\zeta \in B$. Here, we take the limit superior instead of the limit, as the limit of the right-hand side need not exist for every point in $B$. 

Now define 
\begin{align*}
    D_r ( \zeta ) := \{ \eta: |r \zeta_j - \eta_j | \leq 2(1-r): \quad j = 1, \ldots, d \}.
\end{align*}
For $\eta \in D_r(\zeta)$, one can show that
\begin{align*}
    \bigg( \frac{1+r}{4(1-r)} \bigg)^d \leq P(r \zeta, \eta) 
\end{align*}
(see details on p. 4, \cite{clark2}), and so 
\begin{align*}
     \bigg( \frac{1+r}{4(1-r)} \bigg)^d  \sigma_\alpha ( B \cap D_r ( \zeta )) \leq \int_{B \cap D_r ( \zeta) } P( r \zeta, \eta ) d \sigma_\alpha(\eta) \leq \int_{B } P( r \zeta, \eta ) d \sigma_\alpha(\eta), 
\end{align*}
which in turn implies that 
\begin{align}
    \lim_{r \to 1^-} \frac{\sigma_\alpha ( B \cap D_r ( \zeta )) }{(1-r)^d} = 0 \quad \text{$m_d$-almost everywhere in $B$}
    \label{zero}
\end{align}
and, moreover, that the limit superior of this quotient is finite for all $\zeta \in B$ by \eqref{fin}. 

In polar coordinates, we can express $D_r( \zeta)$ as
\begin{align*}
    D_r ( \zeta) = \biggl\{ \zeta e^{i \theta} : |\theta_j| < \cos^{-1}\bigg( 1-\frac{3(1-r)^2}{2r} \bigg), \quad j=1, \ldots, d \biggl\}
\end{align*}
(details on p. 5, \cite{clark2}). We observe that this is, as a subset of $\mathbb{T}^d$, a product of $d$ copies of the same interval. Hence, as $r \to 1^-$, we may estimate the Lebesgue measure of this set as 
\begin{align*}
    |D_r( \zeta) | = 2^d \cos^{-1} \bigg( 1-\frac{3(1-r)^2}{2r} \bigg)^d \geq c(d) \sqrt{\frac{3(1-r)^2}{2r}}^d \geq c'(d) (1-r)^d
\end{align*}
for constants $c(d), c'(d)$ dependent on $d$. Together with \eqref{zero}, this shows that
\begin{align*}
    \lim_{r \to 1^-} \frac{\sigma_\alpha ( B \cap D_r( \zeta)) }{|D_r( \zeta) |} = 0 \quad \text{$m_d$-almost everywhere in $B$,}
\end{align*}
and that the limit superior must be finite for all $\zeta \in   B$.

Note that per definition, $D_r( \zeta)$ is a $d$-dimensional cube with volume tending to zero as $r \to 1^-$ for every $\zeta \in B$. We now claim that 
\begin{align}
    \limsup_{r \to 1^-} \frac{\sigma_\alpha ( B \cap D_r( \zeta)) }{|D_r( \zeta) |} = 0 \quad \text{for every $\zeta \in B$.} 
    \label{limzero}
\end{align}
To prove this, suppose there exists some $z \in B$ such that the limit superior in \eqref{limzero} is nonzero. Since $\sigma_\alpha$ is a finite measure, we have that $\sigma_\alpha ( B \cap D_r( z )) < \infty$. Together with the fact that the denominator tends to zero, this would imply that the limit, and hence limit superior, is infinite for $z \in B$, which is a contradiction by our previous arguments. Hence \eqref{limzero} holds. 

Since $|D_r( \zeta) | \to 0 $ as $r \to 1^-$, the limit in \eqref{limzero} implies that the $n$-dimensional upper density of the restriction measure $( \sigma_\alpha)_{|B}$, defined as $(\sigma_\alpha)_{|B}(A) := \sigma_\alpha ( B \cap A)$, is zero for every point in $\mathbb{T}^d$ (see e.g. Proposition 2.2.2 in \cite{upperdensity}). Thus, $(\sigma_\alpha)_{|B}$ is equal to zero, which in turn implies that $\sigma_\alpha (B) = 0$. \end{proof} 
The inclusion in Proposition \ref{support} is not necessarily strict. In fact, for the classes of functions studied in this text, it follows from our structure formulas (Theorem \ref{generic}, \ref{exceptional} and Corollary \ref{zw_formula}) that $\supp(\sigma_\alpha) = \mathcal{C}_\alpha$. 

Finally, we record a fact which will be used in several proofs down the line:
\begin{lemma}
    The linear span of Poisson kernels $\mathcal{M} := \text{span}\{ P_z : z \in \mathbb{D}^d \}$ is dense in $C ( \mathbb{T}^d )$. 
    \label{density}
\end{lemma}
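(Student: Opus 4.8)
The plan is to argue by duality. Since $\mathbb{T}^d$ is a compact Hausdorff space, the Riesz representation theorem identifies the dual of $C(\mathbb{T}^d)$ with the space $M(\mathbb{T}^d)$ of complex Borel measures on $\mathbb{T}^d$. By the Hahn–Banach theorem, a subspace is dense precisely when the only continuous functional annihilating it is the zero functional; hence it suffices to show that if $\mu \in M(\mathbb{T}^d)$ satisfies $\int_{\mathbb{T}^d} P_z \, d\mu = 0$ for every $z \in \mathbb{D}^d$, then $\mu = 0$. In other words, I would reduce the claim to the statement that a complex Borel measure whose Poisson integral $P[d\mu]$ vanishes identically on $\mathbb{D}^d$ must itself be trivial.

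To establish this reduction, I would expand the Poisson kernel as a multiple Fourier series. Writing $z_j = r_j e^{i\theta_j}$ and $\zeta_j = e^{it_j}$, the one-variable identity $P_{z_j}(\zeta_j) = \sum_{n \in \mathbb{Z}} r_j^{|n|} e^{in(t_j - \theta_j)}$ yields
$$P_z(\zeta) = \prod_{j=1}^d P_{z_j}(\zeta_j) = \sum_{n \in \mathbb{Z}^d} \Big( \prod_{j=1}^d r_j^{|n_j|} \Big) e^{i n \cdot (t - \theta)}.$$
For fixed $z$ with $|z_j| < 1$ this series converges absolutely and uniformly in $\zeta$, since $\sum_{n \in \mathbb{Z}^d} \prod_j r_j^{|n_j|} = \prod_j \frac{1+r_j}{1-r_j} < \infty$. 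As $\mu$ is a finite measure, I may interchange summation and integration to obtain
$$\int_{\mathbb{T}^d} P_z \, d\mu = \sum_{n \in \mathbb{Z}^d} \Big( \prod_{j=1}^d r_j^{|n_j|} \Big) e^{-i n \cdot \theta} \, \widehat{\mu}(-n),$$
where $\widehat{\mu}(m) := \int_{\mathbb{T}^d} e^{-i m \cdot t} \, d\mu$ denotes the $m$-th Fourier coefficient. Assuming this vanishes for every $z$, I would fix the radii $r_j \in (0,1)$ and view the right-hand side as a multiple trigonometric series in $\theta$ that is identically zero; uniqueness of Fourier coefficients then forces $\big(\prod_j r_j^{|n_j|}\big)\widehat{\mu}(-n) = 0$ for all $n$, and since the radial factors are strictly positive, $\widehat{\mu}(m) = 0$ for all $m \in \mathbb{Z}^d$. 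Because trigonometric polynomials are dense in $C(\mathbb{T}^d)$, a measure with all vanishing Fourier coefficients is the zero measure, so $\mu = 0$.

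The argument is essentially soft, and the only genuine technical point is the interchange of summation and integration, which is immediate from the absolute convergence of the Poisson series together with the finiteness of $\mu$. I expect the main conceptual obstacle to be recognizing that a direct appeal to the Stone–Weierstrass theorem is unavailable: the span $\mathcal{M}$ is not an algebra, as a product $P_z P_w$ of Poisson kernels need not lie in $\mathcal{M}$. This is precisely why the duality route through Hahn–Banach, rather than a multiplicative approximation argument, is the natural approach.
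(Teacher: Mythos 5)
Your proof is correct, and it is exactly the duality argument that the paper alludes to: the paper gives no proof of its own, instead noting that the statement is a straightforward generalization of Proposition 1.17 in the cited reference \cite{Intro}, whose one-variable proof proceeds by precisely this Hahn--Banach/Riesz route combined with the Fourier expansion of the Poisson kernel. Your multivariate version, including the justification of the interchange of sum and integral and the observation that Stone--Weierstrass is not directly applicable to $\mathcal{M}$, fills in the details cleanly.
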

The proof is a straight-forward generalization of the proof of Proposition 1.17 in \cite{Intro}. 

\subsection{Clark measures in one variable}
Before getting into examples of Clark measures in higher dimensions, we quote some results from Clark theory in one variable. 
To formulate the main result, we must first introduce the concept of angular derivatives. 
\begin{theorem}
    For an analytic function $f$ on $\mathbb{D}$ and $\zeta_0 \in \mathbb{T}$, the following are equivalent: 
    \begin{enumerate}[(i)]
        \item The non-tangential limits 
        $$f ( \zeta_0 ) = \angle \lim_{z \to \zeta_0} f(z) \quad \text{and} \quad \angle \lim_{z \to \zeta_0} \frac{f(z)- f(\zeta_0)}{z-\zeta_0} $$
        exist; 
        \item The derivative function $f'$ has a non-tangential limit at $\zeta_0$.
    \end{enumerate}
    Under the equivalent conditions above, 
    $$ \angle \lim_{z \to \zeta_0 } \frac{f(z) - f(\zeta_0) }{z- \zeta_0} = \angle  \lim_{z \to \zeta_0 } f'(z).
    $$
    \label{theorem ang der}
\end{theorem}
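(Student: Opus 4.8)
The plan is to prove the two implications separately, identifying in each case the common limiting value so that the claimed equality $\angle\lim_{z\to\zeta_0}\frac{f(z)-f(\zeta_0)}{z-\zeta_0}=\angle\lim_{z\to\zeta_0}f'(z)$ falls out automatically. The geometric workhorse throughout is the observation that slightly widening a Stolz domain absorbs segments and small balls: if $z\in\Gamma_\alpha(\zeta_0)$, then the segment $[\zeta_0,z]$ again lies in $\Gamma_\alpha(\zeta_0)$ (since $w=\zeta_0+t(z-\zeta_0)$ satisfies $1-|w|\ge t(1-|z|)$, whence $\tfrac{|w-\zeta_0|}{1-|w|}\le\tfrac{|z-\zeta_0|}{1-|z|}<\alpha$), while the Euclidean ball $B\bigl(z,c(1-|z|)\bigr)$ lies in $\Gamma_{\alpha'}(\zeta_0)$ for $\alpha'=(c+\alpha)/(1-c)$ and any fixed small $c\in(0,1)$. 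I would record this elementary computation first, as both directions rely on it.

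For the implication (ii) $\Rightarrow$ (i), suppose $\angle\lim_{z\to\zeta_0}f'(z)=L$ exists. Then $f'$ is bounded on $\Gamma_\alpha(\zeta_0)\cap B(\zeta_0,\delta)$, and integrating the bounded derivative along segments shows $f$ is Lipschitz there and so extends continuously to $\zeta_0$; this yields the first non-tangential limit $f(\zeta_0)$. Substituting $\xi=\zeta_0+t(z-\zeta_0)$ in $\int_{[\zeta_0,z]}f'$ gives the representation
\[
\frac{f(z)-f(\zeta_0)}{z-\zeta_0}=\int_0^1 f'\bigl(\zeta_0+t(z-\zeta_0)\bigr)\,dt .
\]
Because the segment stays in $\Gamma_\alpha(\zeta_0)$, the integrand tends to $L$ for each $t$ as $z\to\zeta_0$ and is uniformly bounded, so dominated convergence forces the quotient to converge to $L$. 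This establishes (i) and pins the common value at $L$.

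For the harder direction (i) $\Rightarrow$ (ii), I set $g(z):=\frac{f(z)-f(\zeta_0)}{z-\zeta_0}$, which is analytic on $\mathbb{D}$ and satisfies $\angle\lim_{z\to\zeta_0}g(z)=D$ by hypothesis. From $f(z)=f(\zeta_0)+(z-\zeta_0)g(z)$ we get $f'(z)=g(z)+(z-\zeta_0)g'(z)$, so it suffices to show $(z-\zeta_0)g'(z)\to0$ non-tangentially; then $f'(z)\to D$ and the equality again follows. The decisive estimate applies the Cauchy integral formula to $g-D$ on the circle $|\xi-z|=c(1-|z|)$, which lies in $\Gamma_{\alpha'}(\zeta_0)$ by the opening remark:
\[
|g'(z)|=\bigl|(g-D)'(z)\bigr|\le\frac{1}{c(1-|z|)}\max_{|\xi-z|=c(1-|z|)}|g(\xi)-D| .
\]
As $z\to\zeta_0$ within $\Gamma_\alpha(\zeta_0)$ this whole circle shrinks to $\zeta_0$ inside $\Gamma_{\alpha'}(\zeta_0)$, so the maximum, call it $\varepsilon(z)$, tends to $0$; combined with $|z-\zeta_0|\le\alpha(1-|z|)$ this gives $|(z-\zeta_0)g'(z)|\le(\alpha/c)\,\varepsilon(z)\to0$, as needed.

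The main obstacle is precisely this last estimate. A crude Cauchy bound on $g$ itself yields only $|g'(z)|=O\bigl((1-|z|)^{-1}\bigr)$, hence mere boundedness of $(z-\zeta_0)g'(z)$ rather than decay. The essential trick is to subtract the constant $D$ before estimating, so that the hypothesis $g\to D$ is converted into the \emph{smallness} of $g-D$ on the shrinking circle; and the point that makes this rigorous is the uniform geometric fact that the circle of radius $c(1-|z|)$ remains trapped in a single fixed Stolz domain $\Gamma_{\alpha'}(\zeta_0)$ as $z\to\zeta_0$.
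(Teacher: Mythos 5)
Your proof is correct and is essentially the standard argument: the paper itself gives no proof of this statement, deferring to Theorem 2.19 of \cite{Intro}, where the same two steps appear --- the segment representation $\frac{f(z)-f(\zeta_0)}{z-\zeta_0}=\int_0^1 f'\bigl(\zeta_0+t(z-\zeta_0)\bigr)\,dt$ with dominated convergence for (ii)$\Rightarrow$(i), and the Cauchy estimate applied to $g-D$ on the circle $|\xi-z|=c(1-|z|)$, trapped in a slightly wider Stolz domain, for (i)$\Rightarrow$(ii). The one step you should make explicit is that the Lipschitz/continuous-extension claim in the first implication uses that $\Gamma_\alpha(\zeta_0)$ is convex (its defining inequality can be rewritten as $|z-\zeta_0|+\alpha|z|<\alpha$, whose left-hand side is a convex function of $z$), so that the segment joining any two points of the truncated Stolz region stays inside the region where $f'$ is bounded and a single limit value $f(\zeta_0)$ is obtained.
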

\begin{proof}
    See Theorem 2.19 in \cite{Intro}. 
\end{proof}

\begin{definition} Assuming the conditions of the theorem, we call 
$$ f'( \zeta_0) := \angle \lim_{z \to \zeta_0 } \frac{f(z) - f(\zeta_0) }{z- \zeta_0} = \angle  \lim_{ z \to \zeta_0} f'(z) 
$$
the angular derivative of $f$ at $\zeta_0$. Furthermore, if $f$ maps $\mathbb{D}$ to itself, we say that $f$ has an angular derivative in the sense of Carathéodory at $\zeta_0 \in \mathbb{T}$ if $f$ has an angular derivative at $\zeta_0$ and $f( \zeta_0 ) \in \mathbb{T}$. 
\end{definition}

We now have the machinery needed to state the following proposition, which will be extremely useful to us in later sections:
\begin{proposition}
    Let $\phi: \mathbb{D} \to \mathbb{C}$ be an inner function and let $\alpha \in \mathbb{T}$. Then the associated Clark measure $\sigma_\alpha$ has a point mass at $\zeta \in \mathbb{T}$ if and only if 
    $$ \phi^* ( \zeta) = \lim_{r \to 1^-} \phi(r \zeta) = \alpha
    $$
    and $\phi$ has a finite angular derivative in the sense of Carathéodory at $\zeta$. In this case, 
    $$ \sigma_\alpha( \{ \zeta \} ) = \frac{1}{| \phi'(\zeta)|} < \infty \quad \text{and} \quad \phi'(\zeta) = \frac{\alpha \overline{\zeta}}{\sigma_\alpha( \{ \zeta \} )}.
    $$
    \label{one_var pointmass}
\end{proposition}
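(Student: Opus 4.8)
The plan is to reduce the statement to a single radial-limit computation of the Poisson integral and then feed in one-variable Julia--Carath\'eodory theory. The starting point is the elementary \emph{atom-recovery formula}: for any finite positive Borel measure $\mu$ on $\mathbb{T}$ and any $\zeta_0\in\mathbb{T}$,
$$ \mu(\{\zeta_0\}) = \tfrac{1}{2}\lim_{r\to1^-}(1-r)\,P[d\mu](r\zeta_0), $$
with the limit always existing. I would prove this by splitting $\mu=\mu(\{\zeta_0\})\delta_{\zeta_0}+\nu$ where $\nu(\{\zeta_0\})=0$. Since $P_{r\zeta_0}(\zeta_0)=(1+r)/(1-r)$, the atom contributes exactly $\mu(\{\zeta_0\})(1+r)\to2\mu(\{\zeta_0\})$. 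For the remainder I would use that the rescaled kernel $(1-r)P_{r\zeta_0}(\zeta)$ is bounded by $2$ on all of $\mathbb{T}$ and tends to $0$ for $\zeta$ away from $\zeta_0$; splitting the integral over a small arc around $\zeta_0$ (whose $\nu$-mass is small since $\nu$ has no atom there) and its complement forces the $\nu$-contribution to vanish. Applying this to $\mu=\sigma_\alpha$ and using the defining Herglotz identity $P[d\sigma_\alpha](z)=(1-|\phi(z)|^2)/|\alpha-\phi(z)|^2$ gives
$$ \sigma_\alpha(\{\zeta_0\}) = \tfrac{1}{2}\lim_{r\to1^-}(1-r)\,\frac{1-|\phi(r\zeta_0)|^2}{|\alpha-\phi(r\zeta_0)|^2}, $$
so everything reduces to understanding this radial limit.

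For the direction assuming $\phi^*(\zeta_0)=\alpha$ together with a finite angular derivative in the sense of Carath\'eodory, I would work with the Herglotz function $g(z):=(\alpha+\phi(z))/(\alpha-\phi(z))$, whose real part is exactly $P[d\sigma_\alpha]$. By Theorem \ref{theorem ang der}, the quotient $(\phi(z)-\alpha)/(z-\zeta_0)$ has nontangential limit $\phi'(\zeta_0)$, so along the radius $z=r\zeta_0$ we have $\alpha-\phi(r\zeta_0)=\phi'(\zeta_0)\zeta_0(1-r)+o(1-r)$ while $\alpha+\phi(r\zeta_0)\to2\alpha$; hence $(1-r)g(r\zeta_0)\to2\alpha/(\zeta_0\phi'(\zeta_0))$. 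The Julia--Carath\'eodory theorem (see \cite{Intro}) supplies the phase relation $\phi'(\zeta_0)=\overline{\zeta_0}\,\phi(\zeta_0)\,|\phi'(\zeta_0)|=\overline{\zeta_0}\,\alpha\,|\phi'(\zeta_0)|$ with $0<|\phi'(\zeta_0)|<\infty$, which collapses the limit to the real number $2/|\phi'(\zeta_0)|$. Taking real parts and inserting this into the reduction above gives $\sigma_\alpha(\{\zeta_0\})=\tfrac12\cdot 2/|\phi'(\zeta_0)|=1/|\phi'(\zeta_0)|$, and substituting $|\phi'(\zeta_0)|=1/\sigma_\alpha(\{\zeta_0\})$ back into the phase relation yields the second formula $\phi'(\zeta_0)=\alpha\overline{\zeta_0}/\sigma_\alpha(\{\zeta_0\})$.

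For the converse, suppose $\sigma_\alpha(\{\zeta_0\})=c>0$, so the radial limit above equals $2c$. Then $\frac{1-|\phi(r\zeta_0)|^2}{|\alpha-\phi(r\zeta_0)|^2}\to\infty$; since the numerator is bounded by $1$, the denominator must tend to $0$, forcing $\phi(r\zeta_0)\to\alpha$ and in particular $|\phi(r\zeta_0)|\to1$. The reverse triangle inequality $|\alpha-\phi(r\zeta_0)|\ge 1-|\phi(r\zeta_0)|$ gives $(1-r)\frac{1+|\phi(r\zeta_0)|}{1-|\phi(r\zeta_0)|}\ge(1-r)\frac{1-|\phi(r\zeta_0)|^2}{|\alpha-\phi(r\zeta_0)|^2}$, whose right-hand side tends to $2c$; letting $|\phi(r\zeta_0)|\to1$ then forces $\limsup_{r\to1^-}\frac{1-|\phi(r\zeta_0)|}{1-r}\le 1/c<\infty$. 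Hence the Julia quotient $\frac{1-|\phi|}{1-|z|}$ has finite lower limit at $\zeta_0$, and the Julia--Carath\'eodory theorem supplies a finite angular derivative in the sense of Carath\'eodory with nontangential boundary value $\phi(\zeta_0)=\alpha$. Thus a point mass occurs precisely when both stated conditions hold, and the previous paragraph computes its size.

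The main obstacle is not any single computation but the careful bookkeeping around Julia--Carath\'eodory: one must pass cleanly between the radial data produced by the Poisson integral, the lower limit of the Julia quotient, the existence of the finite angular derivative, and the phase identity $\phi'(\zeta_0)=\overline{\zeta_0}\,\phi(\zeta_0)\,|\phi'(\zeta_0)|$ --- the last being exactly what forces the limiting value to be real and positive and what produces the clean formulas for $\sigma_\alpha(\{\zeta_0\})$ and $\phi'(\zeta_0)$. A secondary technical point is the rigorous justification of the atom-recovery formula (the domination-and-concentration argument for the $\nu$-part) and the upgrade from radial to nontangential limits; both are standard, the former via the uniform bound $(1-r)P_{r\zeta_0}\le 2$ and the latter via the Julia--Carath\'eodory theorem itself.
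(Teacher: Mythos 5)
Your proposal is correct. The paper offers no argument of its own here --- it simply cites Proposition 11.2 of \cite{Intro} --- and what you have written is essentially the standard proof found there: recover the atom via $\sigma_\alpha(\{\zeta_0\})=\tfrac12\lim_{r\to1^-}(1-r)P[d\sigma_\alpha](r\zeta_0)$, compute that radial limit from the Herglotz identity, and invoke the Julia--Carath\'eodory theorem in both directions (the phase relation $\phi'(\zeta_0)=\alpha\overline{\zeta_0}|\phi'(\zeta_0)|$ for the forward implication, and the finiteness of the Julia quotient extracted from $|\alpha-\phi(r\zeta_0)|\ge 1-|\phi(r\zeta_0)|$ for the converse). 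All the individual steps check out, so your write-up is a correct self-contained substitute for the citation.
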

\begin{proof}
    See Proposition 11.2 in \cite{Intro}.
\end{proof}

\begin{example}
    \label{ex:blaschke}
    Let $B(z)$ be a non-constant finite Blaschke product of order $n$, and let $\alpha \in \mathbb{T}$. Then $B(z)$ is an inner function and analytic on $\mathbb{T}$, and $B(\zeta) = \alpha$ has precisely $n$ distinct solutions; denote these by $\eta^\alpha_1, \ldots, \eta^\alpha_n$. Moreover, from properties of finite Blaschke products, its derivative is non-zero on $\mathbb{T}$. By Proposition \ref{one_var pointmass}, the associated Clark measure then satisfies 
    $$ \sigma_\alpha = \sum_{k=1}^n \frac{1}{|B'(\eta^\alpha_k)|}\delta_{\eta^\alpha_k}.
    $$
\end{example}
\begin{example}
\label{exp_ex}
    The function 
    \begin{align*}\phi(z) := \exp \biggl(-\frac{1+z}{1-z}\biggr),  \quad z \in \mathbb{D},\end{align*}
    is inner, and $\phi^*(\zeta)$ exists everywhere on $\mathbb{T}$; this is because
    \begin{align*}
        \bigg| \exp \biggl(-\frac{1+z}{1-z}\biggr) \bigg| = \exp \biggl(\Re \biggl( {-\frac{1+z}{1-z}} \biggr) \biggr) = \exp \biggl(- \frac{1-|z|^2}{|1-z|^2} \biggr), 
    \end{align*}
    from which we can see that $\phi^*( 1) = 0$. Observe that every point $\zeta \neq 1$ on the unit circle solves the equation $\phi^* = \alpha$ for some $\alpha \in \mathbb{T}$, so $\phi^*(\mathbb{T} \setminus \{ 1\} ) = \mathbb{T}$. Moreover, the solutions accumulate in the limit point $\zeta = 1$ for every $\alpha$-value. Since the unimodular level sets are closed by definition, this implies that $1 \in \mathcal{C}_\alpha ( \phi)$ for all $\alpha \in \mathbb{T}$.
    
    Now let $\alpha = 1$ for simplicity. As seen in Example 11.3(ii) in \cite{Intro}, the solutions to $\phi^*( \zeta ) = 1$ are given by 
   \begin{align*}
        \eta_k = \frac{2 \pi k - i}{2 \pi k + i}, \quad k \in \mathbb{Z},
    \end{align*}
    and 
    \begin{align*}
    \frac{1}{|\phi'(\eta_k)|} = \frac{8}{1+4 \pi^2 k^2}.
    \end{align*}
    By Proposition \ref{one_var pointmass}, the Clark measure of $\phi$ associated to $\alpha = 1$ may thus be expressed as
    \begin{align*}
        \sigma_1 = \sum_{k \in \mathbb{Z}} \frac{8}{1+4 \pi^2 k^2} \delta_{\eta_k}.
    \end{align*}
    We will revisit variations of this example in later sections.
\end{example}
By Theorem 4 in \cite{bergqvist}, RP-measures on $\mathbb{T}^d$, $d \geq 2$, cannot be supported on sets of Hausdorff dimension less than one, and in particular, they cannot possess any point masses. One can therefore not hope for an analogous result to Proposition \ref{one_var pointmass} for $d \geq 2$. However, the proposition will still be useful in determining the density of certain Clark measures.

\section{Rational inner functions}
\label{sec:rifs}
There has been significant progress in Clark theory for multivariate rational inner functions, see \cite{clark2} and \cite{clark1}. We already saw a one-variable RIF in Example \ref{ex:blaschke}, where we could describe the Clark measures in a straight-forward manner --- largely because Blaschke products are in fact analytic on $\mathbb{T}$. The main issue when studying RIFs in higher dimensions arises from dealing with potential singularities. However, it turns out that in two variables, the support of any associated Clark measure is actually a finite union of graphs, and that we can explicitly calculate its weights along these. We aim to give an overview of these results in this section. 

We will first need some terminology specific to rational inner functions. We say that a polynomial $p \in \mathbb{C}[z_1,\ldots, z_d]$ is \textit{stable} if it has no zeros in $\mathbb{D}^d$, and that it has \textit{polydegree} $(n_1,\ldots, n_d) \in \mathbb{N}^d$ if $p$ has degree $n_j$ when viewed as a polynomial in $z_j$. By Theorem 5.2.5 in \cite{Rudin}, any rational inner function in $\mathbb{D}^d$ can be written as 
\begin{align*}
    \phi(z) = e^{ia}\prod_{j=1}^d z_j^{k_j} \frac{\tilde{p}(z)}{p(z)}
\end{align*}
where $a \in \mathbb{R}$, $k_1, \ldots, k_d \in \mathbb{N}$, $p$ is a stable polynomial of polydegree $(n_1, \ldots, n_d)$, and
\begin{align*}
\tilde{p}(z) := z_1^{n_1}\cdots z_d^{n_d} \overline{p} \biggl( \frac{1}{\overline{z_1}}, \ldots, \frac{1}{\overline{z_2}} \biggr)
\end{align*}
is its \textit{reflection}. Note that any zero of $p$ will be a zero of $\tilde{p}$ and vice versa, and that $p$ and $\tilde{p}$ have the same polydegree. For simplicity, we will always assume that $\phi(z) = \frac{\tilde{p}}{p}$, where $p$ and $\tilde{p}$ are so called \textit{atoral} --- a concept explored in \cite{atoral} and \cite{local}. In the context of this text, atoral simply means that $p$ and $\tilde{p}$ share no common factors, and that in two dimensions in particular, $p$ and $\tilde{p}$ have finitely many common zeros on $\mathbb{T}^2$ (see Section 2.1, \cite{local}). Hence, a rational inner function $\phi$ in two variables will have at most finitely many singularities on $\mathbb{T}^2$. 

Moreover, we define the polydegree of a rational function $\phi = q/p$ as $(n_1, \ldots, n_d)$, where $p$ and $q$ have no common factors, and $n_j$ is the maximum of the degrees of $p$ and $q$ when viewed as polynomials in variable $z_j$. 
 Thus, the polydegree of $\phi= \tilde{p}/p$ as defined above agrees with the polydegrees of both its numerator and denominator. 

A notable fact about RIFs is that their non-tangential limits exist and are unimodular for every $\zeta \in \mathbb{T}^d$ (Theorem C, \cite{knese}). In \cite{singularities}, the authors prove the following result, which gives us a straight-forward expression for the level sets of RIFs:
\begin{theorem}
    For fixed $\alpha \in \mathbb{T}$, let 
    \begin{align*}
    \mathcal{L}_{\alpha}( \phi) := \{ \zeta \in \mathbb{T}^d: \tilde{p}( \zeta) - \alpha p(\zeta) = 0 \}. 
    \end{align*}
    Then $\mathcal{C}_\alpha ( \phi ) = \mathcal{L}_{\alpha}( \phi ) .$
\end{theorem}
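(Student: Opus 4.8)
The plan is to split $\mathbb{T}^d$ into the regular part, where $p \neq 0$, and the singular set $Z := \{ \zeta \in \mathbb{T}^d : p(\zeta) = 0 \}$, and to prove the two inclusions between $\mathcal{C}_\alpha(\phi)$ and $\mathcal{L}_\alpha(\phi)$ separately. The basic computational input is that a short computation gives $\tilde{p}(\zeta) = \zeta_1^{n_1} \cdots \zeta_d^{n_d}\, \overline{p(\zeta)}$ for $\zeta \in \mathbb{T}^d$, so that $|\tilde{p}(\zeta)| = |p(\zeta)|$ everywhere on $\mathbb{T}^d$; in particular $p(\zeta) = 0$ if and only if $\tilde{p}(\zeta) = 0$, and (when $d=2$, by atorality) $Z$ is a finite set of isolated points. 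Where $p(\zeta_0) \neq 0$ the quotient $\phi = \tilde{p}/p$ is continuous up to the boundary near $\zeta_0$, so its radial limit equals $\tilde p(\zeta_0)/p(\zeta_0)$, which is unimodular; hence on the regular part $\phi^*(\zeta) = \alpha$ holds precisely when $\tilde p(\zeta) - \alpha p(\zeta) = 0$, i.e. $\{ \phi^* = \alpha \} \setminus Z = \mathcal{L}_\alpha(\phi) \setminus Z$. Finally, every $\zeta_0 \in Z$ lies in $\mathcal{L}_\alpha(\phi)$ automatically, since there $\tilde p(\zeta_0) - \alpha p(\zeta_0) = 0 - \alpha \cdot 0 = 0$ for every $\alpha$.

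With these observations the inclusion $\mathcal{C}_\alpha(\phi) \subseteq \mathcal{L}_\alpha(\phi)$ is immediate. The defining set $S_\alpha := \{ \zeta : \lim_{r \to 1^-}\phi(r\zeta) = \alpha \}$ satisfies $S_\alpha \setminus Z \subseteq \mathcal{L}_\alpha(\phi)$ by the regular-part identity and $S_\alpha \cap Z \subseteq Z \subseteq \mathcal{L}_\alpha(\phi)$, so $S_\alpha \subseteq \mathcal{L}_\alpha(\phi)$; since $\mathcal{L}_\alpha(\phi)$ is the zero set of a continuous function on the compact set $\mathbb{T}^d$ it is closed, and taking closures yields $\mathcal{C}_\alpha(\phi) = \overline{S_\alpha} \subseteq \mathcal{L}_\alpha(\phi)$. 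For the reverse inclusion the regular part is again easy: every $\zeta \in \mathcal{L}_\alpha(\phi) \setminus Z$ has $\phi^*(\zeta) = \alpha$, hence lies in $S_\alpha \subseteq \mathcal{C}_\alpha(\phi)$. The entire content of the theorem therefore reduces to showing that each singular point $\zeta_0 \in Z$ lies in $\mathcal{C}_\alpha(\phi)$, i.e. that $\zeta_0$ is a limit of regular points of the level set $\mathcal{L}_\alpha(\phi) \setminus Z$, for every $\alpha \in \mathbb{T}$.

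This accumulation statement is the main obstacle, and I would attack it by slicing and reducing to one-variable theory. After a rotation we may assume $\zeta_0 = (1, \ldots, 1)$; freezing all but one coordinate, say writing points as $(\zeta', z_d)$ with $\zeta' \in \mathbb{T}^{d-1}$ near $(1,\ldots,1)$, the slice $b_{\zeta'}(z_d) := \phi(\zeta', z_d)$ is rational in $z_d$, holomorphic on $\mathbb{D}$ (as $p(\zeta', \cdot)$ is non-vanishing there by stability), and unimodular on $\mathbb{T}$ (by $|\tilde p| = |p|$); hence it is a finite Blaschke product. Because $\zeta_0$ is an isolated common zero of $p$ and $\tilde p$, the zero of $p(\zeta', \cdot)$ that approaches $z_d = 1$ forces a zero–pole pair of $b_{\zeta'}$ to collide at $z_d = 1$ as $\zeta' \to (1,\ldots,1)$, so the Blaschke degree drops in the limit. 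Since a degree-$k$ Blaschke product solves $b_{\zeta'}(z_d) = \alpha$ in exactly $k$ points of $\mathbb{T}$, the solutions that disappear in this degree drop cannot converge to a solution of the lower-degree limiting slice and must instead converge to the collision point $z_d = 1$; this produces, for every $\alpha$, regular points $(\zeta', z_d(\zeta'))$ of $\mathcal{L}_\alpha(\phi)$ tending to $\zeta_0$, exactly as in the one-variable accumulation already seen in Example \ref{exp_ex}.

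The delicate point — and the step I expect to require the most care — is the rigorous justification that the Blaschke degree genuinely drops at the singularity and that the disappearing roots localize at $z_d = 1$ rather than escaping elsewhere. Here I would use atorality (which, in two variables, makes $Z$ a finite set of isolated points, so that the boundary zero of $p(\zeta', \cdot)$ near $z_d = 1$ is well separated from the remaining zeros), together with continuity of roots and the one-variable correspondence between Blaschke zeros, poles and level-set solutions underlying Proposition \ref{one_var pointmass}. One must also verify that at least one slicing direction exhibits the degree drop; since every $\zeta_0 \in Z$ is a genuine singularity of $\phi = \tilde p / p$, the boundary zero of $p$ must localize in some coordinate, and choosing that coordinate as the free variable $z_d$ completes the argument.
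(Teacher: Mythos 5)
First, a point of reference: the paper does not actually prove this theorem --- it is quoted from \cite{singularities} with the proof deferred to Theorem 2.6 there --- so there is no in-paper argument to compare yours against, and I am assessing your proposal on its own terms. Your reduction is correct and well organized: on $\mathbb{T}^d$ one has $\tilde{p}(\zeta)=\zeta_1^{n_1}\cdots\zeta_d^{n_d}\overline{p(\zeta)}$, so $|\tilde p|=|p|$ there; off the common zero set $Z$ the function extends continuously with unimodular value $\tilde p/p$, giving $\{\phi^*=\alpha\}\setminus Z=\mathcal{L}_\alpha(\phi)\setminus Z$; $\mathcal{L}_\alpha(\phi)$ is closed; and the entire content is the accumulation statement $Z\subset\mathcal{C}_\alpha(\phi)$. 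Your slicing strategy for that statement is viable in two variables, but two things need repair. Minor: ``$p(\zeta',\cdot)$ is non-vanishing on $\mathbb{D}$ by stability'' is not immediate, since $\{\zeta'\}\times\mathbb{D}$ is not contained in $\mathbb{D}^d$; you need a Hurwitz limiting argument ($p(r\zeta',\cdot)\to p(\zeta',\cdot)$) plus atorality to rule out the slice vanishing identically. More substantively, the degree-drop bookkeeping does not by itself justify that the ``disappearing'' solutions converge to the collision point rather than piling up on the surviving roots. The clean way to close this is to drop the degree count entirely: set $q_{\zeta_1}(z_2):=\tilde p(\zeta_1,z_2)-\alpha p(\zeta_1,z_2)$, note that for all but finitely many $\zeta_1$ every root of $q_{\zeta_1}$ lies on $\mathbb{T}$ (a root off $\mathbb{T}$ would force a zero--pole cancellation of the slice, impossible in $\mathbb{D}$ by stability, hence by reflection outside $\overline{\mathbb{D}}$, and on $\mathbb{T}$ only at a singularity), observe $q_\tau(\gamma)=0$, and apply continuity of roots/Hurwitz to get roots of $q_{\zeta_1}$ on $\mathbb{T}$ converging to $\gamma$; the degenerate case $q_\tau\equiv 0$ (an exceptional value) is handled directly since the whole line then lies in the level set.

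The genuine gap is the case $d\geq 3$, for which the theorem is also asserted. Your localization argument leans essentially on the singular set being a finite collection of isolated points, which the paper (and atorality) only guarantees for $d=2$; in three or more variables $Z(p)\cap Z(\tilde p)\cap\mathbb{T}^d$ can be positive-dimensional, so ``the boundary zero of $p(\zeta',\cdot)$ near $z_d=1$ is well separated from the remaining zeros'' and ``the degree drops exactly at $\zeta'=\zeta_0'$'' are no longer available, and the final paragraph's appeal to choosing a good slicing direction does not survive as stated. You flag this step as delicate, which is the right instinct, but as written the proposal only establishes the two-variable case; the higher-dimensional statement requires a different mechanism (in the cited reference this is where the real work lies).
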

\begin{proof}
    See Theorem 2.6 in \cite{singularities}.
\end{proof}
Note that for any zero of $p$, the equation $\tilde{p}- \alpha p = 0$ is trivially satisfied. This implies that all singularities of $\phi$ on $\mathbb{T}^d$ are contained in $\mathcal{C}_\alpha( \phi)$. Moreover, observe that $\{ \phi^* = \alpha \}$ is in general not a closed set. However, by the theorem above, we may characterize $\mathcal{C}_\alpha ( \phi)$ as the zeros of a polynomial when $\phi$ is a RIF. 

When $d=2$, one can find an even nicer characterization of the unimodular level sets:
\begin{lemma}
    Let $\phi= \frac{\tilde{p}}{p}$ be a RIF of bidegree $(m,n)$, and fix $\alpha \in \mathbb{T}$. For any choice of $\tau_0 \in \mathbb{T}$, there exists a finite number of functions $g^{\alpha}_1, \ldots, g^{\alpha}_n$ defined on $\mathbb{T}$ and analytic on $\mathbb{T} \setminus \{ \tau_0 \} $ such that $\mathcal{C}_\alpha( \phi)$ can be written as a union of curves
    \begin{align*}
        \{ ( \zeta, g^{\alpha}_j ( \zeta)): \zeta \in \mathbb{T}\}, \quad j= 1, \ldots, n,
    \end{align*}
    potentially together with a finite number of vertical lines $\zeta_1 = \tau_1, \ldots, \zeta_1= \tau_k$, where each $\tau_j \in \mathbb{T}$.
    \label{parameter}
\end{lemma}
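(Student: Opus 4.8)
The plan is to deduce everything from the preceding theorem, which identifies $\mathcal{C}_\alpha(\phi)$ with the algebraic set $\mathcal{L}_\alpha(\phi) = \{\zeta \in \mathbb{T}^2 : q_\alpha(\zeta) = 0\}$, where $q_\alpha := \tilde{p} - \alpha p$. Since $\phi$ has bidegree $(m,n)$, I regard $q_\alpha$ as a polynomial in $z_2$ whose coefficients are polynomials in $z_1$,
\[
q_\alpha(z_1,z_2) = \sum_{k=0}^{n} a_k(z_1)\, z_2^k,
\]
of degree at most $n$. The vertical lines come out first: one has $q_\alpha(\tau_1,\cdot)\equiv 0$ precisely when $a_0(\tau_1)=\cdots=a_n(\tau_1)=0$, and since $\phi$ is nonconstant we have $\tilde{p}\not\equiv\alpha p$, so some $a_k\not\equiv 0$ and only finitely many values $\tau_1,\ldots,\tau_k\in\mathbb{T}$ can occur. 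Each contributes the full fiber $\{\tau_j\}\times\mathbb{T}\subset\mathcal{L}_\alpha$, i.e. a vertical line. After removing these, I am left to parameterize, for the remaining $z_1$, the unimodular roots $z_2$ of a genuinely nonzero polynomial of degree at most $n$.

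Next I would establish local analyticity of the root functions. For all but finitely many $\tau\in\mathbb{T}$, the slice $z_2\mapsto\phi(\tau,z_2)=\tilde{p}(\tau,z_2)/p(\tau,z_2)$ is a one-variable rational inner function — a finite Blaschke product — of degree exactly $n$; as in Example \ref{ex:blaschke} its derivative is nonvanishing on $\mathbb{T}$, so $\phi(\tau,\cdot)=\alpha$ has exactly $n$ distinct, simple solutions on $\mathbb{T}$, which are precisely the unimodular roots of $q_\alpha(\tau,\cdot)$. At such a non-singular root $(\tau,\eta)$, where $p(\tau,\eta)\neq 0$, differentiating $\phi=\tilde{p}/p$ and using $\tilde{p}(\tau,\eta)=\alpha p(\tau,\eta)$ yields $\partial_{z_2}q_\alpha(\tau,\eta)=p(\tau,\eta)\,\partial_{z_2}\phi(\tau,\eta)\neq 0$. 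Hence the holomorphic implicit function theorem produces, near each such root, a unique analytic solution $z_2=g(z_1)$ of $q_\alpha=0$ taking values on $\mathbb{T}$.

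Third, I would globalize. Fixing $\tau_0\in\mathbb{T}$, the arc $A:=\mathbb{T}\setminus\{\tau_0\}$ is simply connected, and over $A$ (minus the finitely many singular and degenerate values of $z_1$) the $n$ unimodular roots form the sheets of an $n$-fold covering; since these roots are simple and remain on $\mathbb{T}$ they never collide, so there are no branch points, and simple connectivity lets me continue each local branch to single-valued analytic functions $g^\alpha_1,\ldots,g^\alpha_n$ on $A$. The finitely many exceptional values of $z_1$ I would handle by a removable-singularity argument in the $z_1$-plane: each $g^\alpha_j$ is a bounded, continuous algebraic function that is holomorphic off a finite set, hence holomorphic there too. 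Taking boundary limits along $A$ extends each $g^\alpha_j$ to all of $\mathbb{T}$; the only genuine obstruction to analyticity around the whole circle is the monodromy permutation of the sheets as $z_1$ winds once around $\mathbb{T}$, which is exactly why a single cut at the \emph{arbitrary} point $\tau_0$ is both necessary and sufficient.

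The main obstacle is this last step, and specifically the behavior at the finitely many singularities of $\phi$ on $\mathbb{T}^2$. There the slice degenerates, a sheet can leave $\mathbb{T}$ or the root count can drop, and one must verify that the parameterizing functions nonetheless extend continuously across each singular fiber — so that the covering picture and the removable-singularity argument go through — and that no interior branch point is created, since otherwise no choice of $\tau_0$ would suffice. I would control this using the local structure of RIF singularities together with the fact, guaranteed by the preceding theorem, that $\mathcal{C}_\alpha$ is closed; matching branches across singular fibers and confirming that the full level set is exhausted by the $n$ graphs together with the vertical lines is where the bulk of the technical care lies.
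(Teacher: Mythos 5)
Your proposal follows essentially the same route as the paper's (outlined) argument from \cite{clark2}: extract the vertical lines from the identically vanishing slices of $\tilde{p}-\alpha p$, obtain local analytic branches via the implicit function theorem at non-singular points where the slice $\phi(\tau,\cdot)$ is a degree-$n$ Blaschke product with simple unimodular roots, invoke the local theory of RIF singularities from \cite{local} to get smooth branches through the finitely many singular fibers, and glue along the cut circle $\mathbb{T}\setminus\{\tau_0\}$ to account for monodromy. You correctly identify the singular fibers as the locus of the real technical work, which is precisely the step the paper delegates to \cite{local}.
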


\begin{proof}
    See Lemma 2.5 in \cite{clark2}. 
\end{proof}

The proof is quite technical and will only be outlined here. Specifically, the authors fix a point $\tau \in \mathbb{T}$ and construct a parameterization of $\mathcal{C}_\alpha(\phi) \cap (I_\tau \times \mathbb{T})$ for a small interval $I_\tau \ni \tau$ in $\mathbb{T}$. When $\tau$ is not the $z_1$-coordinate of a singularity of $\phi$, one can use properties of RIFs to show that $\phi(\tau,\cdot)$ satisfies the conditions of the Implicit Function Theorem. Hence the solutions to $\phi^* = \alpha$ can be parameterized by smooth curves on the strip $I_\tau \times \mathbb{T}$. 

The main issue arises from the fact that the curves $g_j^\alpha$ might intersect at singularities of $\phi$, in which case analyticity is not obvious. One must thus ensure that we can in some sense “pull apart” any crossed curves in $\mathcal{C}_\alpha(\phi)$ and prove that they are each analytic when viewed separately. However, it is shown in  \cite{local} that near each singularity of $\phi$, the level sets actually do consist of smooth curves. Hence, even in the case where $\tau$ is the $z_1$-coordinate of a singularity, one obtains a smooth parameterization of $I_\tau \times \mathbb{T}$. In the last step of the proof, the authors glue together the local parameterizations, which yields the final result. 

The analysis of Clark measures of RIFs must now be divided into two cases; when the unimodular constant $\alpha $ is \textit{generic} versus \textit{exceptional} as defined below.
\begin{definition}
    We say that $\alpha \in \mathbb{T}$ is an exceptional value if $\phi(\tau, \zeta_2) \equiv \alpha$ or  $\phi(\zeta_1, \tau) \equiv \alpha$ for some $\tau \in \mathbb{T}$. If $\alpha$ is not exceptional, we say that it is generic.
    \label{def_gen_exc}
\end{definition}
The different cases stem from the characterization of $\mathcal{C}_\alpha ( \phi) $ in Lemma \ref{parameter}; if $\alpha$ is an exceptional value, by the definition above, the level sets will contain lines of the form $ \{ \zeta_1 = \tau \}$ or $ \{ \zeta_2 = \tau \}$. If $\alpha$ is generic, $\mathcal{C}_\alpha ( \phi)$ can be fully described by the graphs of the functions $g^{\alpha}_1, \ldots, g^{\alpha}_n$. 

\begin{theorem}
    Let $\phi = \frac{\tilde{p}}{p}$ be a RIF of bidegree $(m,n)$ and $\alpha \in \mathbb{T}$ a generic value for $\phi$. Then the associated Clark measure $\sigma_\alpha$ satisfies 
    \begin{align*}
        \int_{\mathbb{T}^2} f( \xi) d \sigma_\alpha ( \xi) = \sum_{j=1}^n \int_\mathbb{T} f( \zeta, g^{\alpha}_j ( \zeta)) \frac{dm(\zeta)}{|\frac{\partial \phi}{\partial z_2} ( \zeta, g^{\alpha}_j ( \zeta) ) |}
    \end{align*}
    for all $f \in C ( \mathbb{T}^2) $, where $g^{\alpha}_1, \ldots, g^{\alpha}_n$ are the parameterizing functions from Lemma \ref{parameter}.
    \label{generic}
\end{theorem}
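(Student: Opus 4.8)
The plan is to identify both sides as continuous linear functionals on $C(\mathbb{T}^2)$ and check that they agree on the dense set of Poisson kernels. The left-hand side is integration against the finite positive measure $\sigma_\alpha$. The right-hand side, call it $\nu_\alpha(f)$, is a positive linear functional, and I would first record that it is finite: evaluating at $f\equiv 1$ gives $\nu_\alpha(1)=\sum_j\int_\mathbb{T}\frac{dm(\zeta)}{|\frac{\partial \phi}{\partial z_2}(\zeta,g^\alpha_j(\zeta))|}$, which I will show below equals $\int_\mathbb{T}\frac{1-|\phi(\zeta,0)|^2}{|\alpha-\phi(\zeta,0)|^2}\,dm(\zeta)<\infty$. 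With both functionals bounded, Lemma \ref{density} reduces the theorem to verifying, for every $z\in\mathbb{D}^2$, that $\int_{\mathbb{T}^2}P_z\,d\sigma_\alpha=\nu_\alpha(P_z)$. The left-hand side is immediate from the defining property of the Clark measure: $\int_{\mathbb{T}^2}P_z\,d\sigma_\alpha=\frac{1-|\phi(z)|^2}{|\alpha-\phi(z)|^2}$. Everything therefore reduces to computing $\nu_\alpha(P_z)$ and matching it to this expression.

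For the right-hand side I would use $P_z(\zeta)=P_{z_1}(\zeta_1)P_{z_2}(\zeta_2)$ and interchange the finite sum with the $\zeta$-integral to obtain $\nu_\alpha(P_z)=\int_\mathbb{T}P_{z_1}(\zeta)\Big[\sum_{j=1}^n P_{z_2}(g^\alpha_j(\zeta))\,\frac{1}{|\frac{\partial\phi}{\partial z_2}(\zeta,g^\alpha_j(\zeta))|}\Big]\,dm(\zeta)$. The key observation is that for a.e. fixed $\zeta\in\mathbb{T}$ the slice $z_2\mapsto\phi(\zeta,z_2)$ is a one-variable finite Blaschke product of order $n$, its $\alpha$-points are exactly $g^\alpha_1(\zeta),\dots,g^\alpha_n(\zeta)$, and $\frac{\partial\phi}{\partial z_2}(\zeta,\cdot)$ is its ordinary derivative. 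This is precisely where genericity of $\alpha$ (Definition \ref{def_gen_exc}) and Lemma \ref{parameter} enter: off a Lebesgue-null set of $\zeta$, the slice does not drop degree and the $n$ solutions are distinct, so Example \ref{ex:blaschke} applies and the bracketed sum is the Poisson integral of the one-variable Clark measure of the slice, namely $\frac{1-|\phi(\zeta,z_2)|^2}{|\alpha-\phi(\zeta,z_2)|^2}$. Thus $\nu_\alpha(P_z)=\int_\mathbb{T}P_{z_1}(\zeta)\,\frac{1-|\phi(\zeta,z_2)|^2}{|\alpha-\phi(\zeta,z_2)|^2}\,dm(\zeta)$ (and the $f\equiv1$ case above is recovered by setting $z=0$).

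The final step is to read this as a Poisson representation in the first variable. For fixed $z_2\in\mathbb{D}$, the function $z_1\mapsto\frac{1-|\phi(z_1,z_2)|^2}{|\alpha-\phi(z_1,z_2)|^2}$ is the real part of $\frac{\alpha+\phi(z_1,z_2)}{\alpha-\phi(z_1,z_2)}$, holomorphic in $z_1$, hence positive harmonic on $\mathbb{D}$. I would then argue it is continuous on $\overline{\mathbb{D}}$: stability of $p$ forces $p(\zeta_1,z_2)\neq0$ for $\zeta_1\in\mathbb{T}$, $z_2\in\mathbb{D}$ (a boundary zero would, by continuity of the roots in $z_2$, migrate to a zero of $p$ inside $\mathbb{D}^2$), so $\phi(\cdot,z_2)$ has no poles on $\mathbb{T}$, and since $|\phi(\zeta_1,z_2)|<1=|\alpha|$ the denominator is bounded away from $0$ by compactness of $\mathbb{T}$. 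A function harmonic in $\mathbb{D}$ and continuous on $\overline{\mathbb{D}}$ equals the Poisson integral of its boundary values, which here are exactly the slice values $\frac{1-|\phi(\zeta,z_2)|^2}{|\alpha-\phi(\zeta,z_2)|^2}$. Hence the displayed integral equals $\frac{1-|\phi(z_1,z_2)|^2}{|\alpha-\phi(z_1,z_2)|^2}$, matching the left-hand side and finishing the argument.

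I expect the main obstacle to be the two analytic inputs feeding the slicing step rather than the bookkeeping. First, one must justify that for each interior $z_2$ the slice $\phi(\cdot,z_2)$ is pole-free on $\mathbb{T}$ with $|\alpha-\phi|$ bounded below, so that the one-variable Poisson representation is valid and yields the correct boundary values. Second, one must confirm that the exceptional set of $\zeta$—where the slice fails to be an order-$n$ Blaschke product with $n$ distinct $\alpha$-points, or where the $g^\alpha_j$ are not defined or not analytic—is Lebesgue-null, which is exactly the content of genericity together with Lemma \ref{parameter}. The measurability of the integrand and the interchange of the finite sum with the integral are routine.
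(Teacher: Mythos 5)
The paper does not prove Theorem \ref{generic}; it is quoted from \cite{clark2} (Theorem 3.3 there), so there is no in-paper argument to compare against line by line. That said, your proposal is correct and follows essentially the same strategy as the cited proof and as the paper's own proofs of the analogous results (Theorems \ref{zw-thm} and \ref{prod_formula1}): test both functionals on Poisson kernels, invoke Lemma \ref{density}, slice in one variable, identify the bracketed sum as the Poisson integral of the one-variable Clark measure of the slice Blaschke product via Proposition \ref{one_var pointmass} / Example \ref{ex:blaschke}, and finish with the one-variable Poisson representation in $z_1$ --- exactly the mechanism used in \eqref{pif} and \eqref{limit of u integral}.

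Two small points deserve care but do not break the argument. First, your inequality $|\phi(\zeta_1,z_2)|<1$ for $\zeta_1\in\mathbb{T}$, $z_2\in\mathbb{D}$ can fail at the finitely many $\tau$ where the slice $\phi(\tau,\cdot)$ degenerates to a unimodular constant $c$; genericity of $\alpha$ only guarantees $c\neq\alpha$, which is what you actually need for the denominator to stay bounded below, so the conclusion survives but the stated reason should be adjusted. Second, the claim that for all but finitely many $\zeta$ the slice is a Blaschke product of full degree $n$ with $n$ distinct $\alpha$-points is a standard fact about atoral RIFs but is not literally contained in Lemma \ref{parameter}, which only supplies the $n$ parameterizing functions; you correctly flag this as the analytic input that must be supplied, and with it the identification of $\{g_1^\alpha(\zeta),\dots,g_n^\alpha(\zeta)\}$ with the full solution set of the slice equation goes through for a.e.\ $\zeta$.
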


\begin{proof}
    See Theorem 3.3 in \cite{clark2}. 
\end{proof}

\begin{theorem}
    Let $\phi = \frac{\tilde{p}}{p}$ be a RIF of bidegree $(m,n)$ and $\alpha \in \mathbb{T}$ an exceptional value for $\phi$. Then, for $f \in C(\mathbb{T}^2)$, the associated Clark measure $\sigma_\alpha$ satisfies 
    \begin{align*}
        \int_{\mathbb{T}^2} f( \xi) d \sigma_\alpha ( \xi ) = \sum_{j=1}^n \int_\mathbb{T} f( \zeta, g^{\alpha}_j ( \zeta)) \frac{dm(\zeta)}{|\frac{\partial \phi}{\partial z_2} ( \zeta, g^{\alpha}_j ( \zeta) ) |} + \sum_{k=1}^\ell  c^\alpha_k \int_\mathbb{T}f( \tau_k, \zeta) dm(\zeta),
    \end{align*}
    where $g_1^\alpha, \ldots, g_n^\alpha$ are the parameterizing functions and $\zeta_1 = \tau_1, \ldots, \zeta_1 = \tau_\ell$ the vertical lines in $\mathcal{C}_\alpha ( \phi)$ from Lemma \ref{parameter}, and $c_k^\alpha := 1/|\frac{\partial \phi}{\partial z_1}(\tau_k, z_2)| > 0$ are constants.
    \label{exceptional}
\end{theorem}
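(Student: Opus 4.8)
The plan is to adapt the one-variable slicing idea to two stages, reducing the asserted equality of measures to an equality of Poisson integrals and then evaluating both sides by passing to one-variable Clark measures; the feature that distinguishes the exceptional case from Theorem \ref{generic} is that one of the two families of slices \emph{degenerates} precisely along the lines $\zeta_1=\tau_k$, and it is this degeneration that manufactures the second sum. By Lemma \ref{density} it suffices to verify the formula for $f=P_w$, $w=(w_1,w_2)\in\mathbb{D}^2$, since the span of Poisson kernels is dense in $C(\mathbb{T}^2)$ and both sides define finite functionals (finiteness of the curve integrals is inherited from the generic analysis, and the line integrals are plainly finite). For such $f$ the left-hand side is, by definition of the Clark measure, $\int_{\mathbb{T}^2}P_w\,d\sigma_\alpha=\frac{1-|\phi(w)|^2}{|\alpha-\phi(w)|^2}$, so the task is to show the right-hand side equals this quantity.

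First I would fix $w_2\in\mathbb{D}$ and regard $z_1\mapsto\phi(z_1,w_2)$ as a holomorphic self-map of $\mathbb{D}$ (bounded by $1$ by continuity, but \emph{not} inner). The function $z_1\mapsto\frac{1-|\phi(z_1,w_2)|^2}{|\alpha-\phi(z_1,w_2)|^2}$ is positive and harmonic, hence by Herglotz the Poisson integral of a unique positive measure $\nu_{w_2}$ on $\mathbb{T}$, the one-variable Aleksandrov–Clark measure of $\phi(\cdot,w_2)$ at $\alpha$. I would split $\nu_{w_2}$ into its absolutely continuous part, of density $\zeta_1\mapsto\frac{1-|\phi(\zeta_1,w_2)|^2}{|\alpha-\phi(\zeta_1,w_2)|^2}$, and its atoms. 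An atom sits at $\zeta_1\in\mathbb{T}$ exactly when $\phi(\zeta_1,w_2)=\alpha$ with finite angular derivative; applying the maximum modulus principle to the orthogonal slice $z_2\mapsto\phi(\zeta_1,z_2)$ shows that $|\phi(\zeta_1,w_2)|=1$ forces $\phi(\zeta_1,\cdot)\equiv\alpha$, so the only atoms are at the exceptional points $\tau_1,\dots,\tau_\ell$, with masses $1/|\tfrac{\partial\phi}{\partial z_1}(\tau_k,w_2)|$ by the one-variable point-mass formula (the analogue of Proposition \ref{one_var pointmass} for general self-maps, via Theorem \ref{theorem ang der}). Evaluating the Herglotz representation at $z_1=w_1$ gives
$$\frac{1-|\phi(w)|^2}{|\alpha-\phi(w)|^2}=\int_{\mathbb{T}}P_{w_1}(\zeta_1)\frac{1-|\phi(\zeta_1,w_2)|^2}{|\alpha-\phi(\zeta_1,w_2)|^2}\,dm(\zeta_1)+\sum_{k=1}^\ell\frac{P_{w_1}(\tau_k)}{|\tfrac{\partial\phi}{\partial z_1}(\tau_k,w_2)|}.$$
For the first term I would now slice in the other variable: for $m$-a.e.\ $\zeta_1\in\mathbb{T}$ the map $z_2\mapsto\phi(\zeta_1,z_2)$ is a genuine finite Blaschke product of degree $n$, whose $\alpha$-level Clark measure is $\sum_{j=1}^n|\tfrac{\partial\phi}{\partial z_2}(\zeta_1,g^\alpha_j(\zeta_1))|^{-1}\delta_{g^\alpha_j(\zeta_1)}$ by Example \ref{ex:blaschke}, the points $g^\alpha_j(\zeta_1)$ being the curve parameterizations of Lemma \ref{parameter}. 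Its Poisson integral reproduces the a.c.\ density, and substituting, together with $P_{w_1}(\zeta)P_{w_2}(g^\alpha_j(\zeta))=P_w(\zeta,g^\alpha_j(\zeta))$, converts the first term into the curve sum of the statement; likewise $\int_{\mathbb{T}}P_{w_2}\,dm=1$ converts the atomic term into $\sum_k c^\alpha_k\int_{\mathbb{T}}P_w(\tau_k,\zeta)\,dm(\zeta)$, provided $c^\alpha_k=1/|\tfrac{\partial\phi}{\partial z_1}(\tau_k,w_2)|$.

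The hard part will be exactly the assertion, implicit in the statement, that $c^\alpha_k$ is a genuine constant, i.e.\ that $|\tfrac{\partial\phi}{\partial z_1}(\tau_k,z_2)|$ is independent of $z_2$. This is in fact \emph{forced}: since $w_2\in\mathbb{D}$ is free and the kernels $w_1\mapsto P_{w_1}(\tau_k)$ are linearly independent for distinct $\tau_k$, matching the two expressions above for all $w$ requires the atom masses to be independent of $w_2$. To prove it I would exploit the rational structure: $\phi(\tau_k,\cdot)\equiv\alpha$ means $(z_1-\tau_k)$ divides $p-\overline{\alpha}\tilde p$, say $p-\overline{\alpha}\tilde p=(z_1-\tau_k)r$, whence $\tfrac{\partial\phi}{\partial z_1}(\tau_k,z_2)=-\alpha\,r(\tau_k,z_2)/p(\tau_k,z_2)$, and one shows $r(\tau_k,\cdot)/p(\tau_k,\cdot)$ is constant using the reflection relation $\tilde p(\zeta)=\zeta_1^m\zeta_2^n\,\overline{p(\zeta)}$ on $\mathbb{T}^2$ together with $|\phi|=1$ there, which makes the boundary contact along the whole line $\{z_1=\tau_k\}$ flat of constant order. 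Establishing this constancy---equivalently, that the one-variable angular derivative of $\phi$ in $z_1$ is constant along each degenerate line---is the technical heart of the argument. Once it is in hand, the two displayed expressions coincide for every $w\in\mathbb{D}^2$, and density of the Poisson kernels (Lemma \ref{density}) upgrades this equality of Poisson integrals to equality of the measures.
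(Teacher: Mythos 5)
The paper does not prove this statement itself --- it cites Theorem 3.8 of \cite{clark2} --- but your two-stage slicing architecture (Herglotz representation of $z_1\mapsto\frac{1-|\phi(z_1,w_2)|^2}{|\alpha-\phi(z_1,w_2)|^2}$ for fixed $w_2$, identification of the atoms at the $\tau_k$ via the maximum principle on the orthogonal slices, disintegration of the absolutely continuous part by the one-variable Clark measures of the Blaschke slices $\phi(\zeta_1,\cdot)$, and density of Poisson kernels) is exactly the strategy of that reference, and all of those steps are sound. The one genuine gap is the point you yourself flag as the technical heart: the constancy of $z_2\mapsto|\tfrac{\partial\phi}{\partial z_1}(\tau_k,z_2)|$. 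Neither of your two justifications closes it. The ``forced by matching'' remark is circular: the two expressions you are matching are the Herglotz representation and the asserted formula of the theorem, so you would be assuming that the $c_k^\alpha$ are constants in order to conclude that the atom masses $1/|\tfrac{\partial\phi}{\partial z_1}(\tau_k,w_2)|$ do not depend on $w_2$. The algebraic route is only a sketch, and the ingredients you name do not suffice: the reflection relation together with $|\phi|=1$ on $\mathbb{T}^2$ shows that $r(\tau_k,\cdot)$ and $p(\tau_k,\cdot)$ are self-inversive and that $\overline{\alpha}\tau_k\,\tfrac{\partial\phi}{\partial z_1}(\tau_k,\zeta_2)$ is real for $\zeta_2\in\mathbb{T}$ away from singularities, i.e.\ it pins down the \emph{argument} of the derivative on the boundary, not its modulus. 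A holomorphic function on $\mathbb{D}$ with real (even nonnegative) boundary values need not be constant when boundary poles or zeros are allowed --- consider $\bigl(-i\tfrac{1+z}{1-z}\bigr)^2$ --- and $p(\tau_k,\cdot)$ genuinely can vanish on $\mathbb{T}$ (in Example \ref{ex:rif} the exceptional line $z_1=1$ passes through the singularity $(1,1)$), so boundary information alone cannot finish the argument.

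The missing ingredient is interior, not boundary, information, and it is already contained in your own setup. For each fixed $z_2\in\mathbb{D}$ the slice $\phi(\cdot,z_2)$ is a rational self-map of $\mathbb{D}$, analytic across $\mathbb{T}$, with $\phi(\tau_k,z_2)=\alpha$; the point-mass formula of Proposition \ref{one_var pointmass} (in its standard form for self-maps, via Julia--Carath\'eodory) gives not only the mass $\nu_{z_2}(\{\tau_k\})=1/|\tfrac{\partial\phi}{\partial z_1}(\tau_k,z_2)|$ but also the phase identity $\tfrac{\partial\phi}{\partial z_1}(\tau_k,z_2)=\alpha\overline{\tau_k}/\nu_{z_2}(\{\tau_k\})$. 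Hence $z_2\mapsto\overline{\alpha}\tau_k\,\tfrac{\partial\phi}{\partial z_1}(\tau_k,z_2)$ is a holomorphic function on $\mathbb{D}$ (the stable denominator $p(\tau_k,\cdot)$ cannot vanish on $\mathbb{D}$) taking only positive real values on all of $\mathbb{D}$, and a holomorphic function whose range lies in $\mathbb{R}$ is constant by the open mapping theorem. With that observation inserted, your argument is complete and agrees with the proof in \cite{clark2}.
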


\begin{proof}
    See Theorem 3.8 in \cite{clark2}.
\end{proof}
Note that in both the generic and exceptional case, the weights of the Clark measures along level curve components are generally given by one-variable functions. 
The fine structure of RIF weights is thoroughly analyzed in \cite{clark2} --- we will only briefly touch upon this here. For $\phi = \tilde{p} / p$, let $W_j^\alpha(\zeta) :=  |\frac{\partial \phi}{\partial z_2} ( \zeta, g^{\alpha}_j ( \zeta) ) |^{-1}$  denote the weights from Theorem \ref{generic} and Theorem \ref{exceptional}. Then, by Lemma 5.1 in \cite{clark2}, these functions are in $L^1(\mathbb{T})$ and may be expressed as
\begin{align*}
    W_j^\alpha(\zeta) = \frac{|p(\zeta, g_j^\alpha ( \zeta)|}{| \frac{\partial \tilde{p}}{\partial z_2}(\zeta, g_j^\alpha(\zeta)) - \alpha \frac{\partial p}{\partial z_2} (\zeta, g_j^\alpha(\zeta)) |}.
\end{align*}
As we established earlier, if $(\tau, \gamma) \in \mathbb{T}^2$ is a singularity of $\phi$, then $(\tau, \gamma) \in \mathcal{C}_\alpha(\phi)$ for every $\alpha \in \mathbb{T}$. If $(\tau, \gamma) = (\tau, g_j^\alpha(\tau))$ for some level curve $g_j^\alpha$, then $p(\tau, g_j^\alpha(\tau)) = 0$ and one might expect $W_j^\alpha$ to be zero at this point --- at least if there is no cancellation from the denominator. However, it could be that there are curve components $g_j^\alpha$ in $\mathcal{C}_\alpha(\phi)$ which do not satisfy $g_j^\alpha(\tau) = \gamma$. In \cite{clark2}, the authors introduce the notion of \textit{contact order} and prove the following statement:
\begin{center}
\textit{For all but finitely many $\alpha$, if a branch $(\zeta, g_j^\alpha(\zeta))$ of $\mathcal{C}_\alpha(\phi)$ passes through the singularity $(\tau, \gamma)$, the corresponding weight function $W_j^\alpha$ has order of vanishing at $\tau$ that corresponds to the contact order of the corresponding branch of $\mathcal{Z}(\tilde{p})$ at $(\tau, \gamma)$.}
\end{center}
Here, $\mathcal{Z}(\tilde{p})$ denotes the zero set of the polynomial $\tilde{p}$. The precise statement can be found in Theorem 5.6 in \cite{clark2}; the gist is that there are constants $c,C$ such that one can bound
$$ 0 < c \leq \frac{W_j^\alpha(\zeta)}{|\zeta-\tau|^{K_j}} \leq C
$$
for all $\zeta$ in a neighborhood of $\tau$, where $K_j$ is the contact order of $\phi$ at $(\tau, \gamma) = (\tau, g_j^\alpha(\tau))$ associated with the branch $g_j^\alpha$. Consequently, under these conditions, $W_j^\alpha$ is a bounded function. 

The case of RIFs of bidegree $(n,1)$ specifically has been studied in great detail in \cite{clark1}. For these functions, we obtain a more explicit version of Theorem \ref{exceptional}. If $\phi = \tilde{p}/p$ has bidegree $(n,1)$, we may write
    \begin{align*}
        p(z) = p_1(z_1) + z_2 p_2(z_1)
    \quad \text{and} \quad
    \tilde{p}(z) = z_2 \tilde{p}_1(z_1)+ \tilde{p}_2(z_1)
    \end{align*}
    for reflections $\tilde{p}_i = z_1^n \overline{p}_i(1/\overline{z}_i)$. In this case, solving $\phi^* = \alpha$ for $z_2$ yields $z_2 = \frac{1}{B_\alpha(z_1)}$, where 
    \begin{align*} 
    B_\alpha(z) := \frac{\tilde{p}_1(z) - \alpha p_2(z)}{\alpha p_1(z) - \tilde{p}_2(z)}.
    \end{align*}
    Moreover, define
    \begin{align*} 
    W_\alpha(\zeta) := \frac{|p_1(\zeta)|^2-|p_2(\zeta)|^2}{|\tilde{p}_1(\zeta) - \alpha p_2(\zeta)|^2}. 
    \end{align*}
    Then, by Theorem 1.2 in \cite{clark1}, we have
    \begin{align*}
        \int_{\mathbb{T}^2} f( \xi) d \sigma_\alpha ( \xi ) = \int_\mathbb{T} f( \zeta, \overline{B_\alpha ( \zeta)}) W_\alpha( \zeta) dm(\zeta) + \sum_{k=1}^\ell  c^\alpha_k \int_\mathbb{T}f( \tau_k, \zeta) dm(\zeta)
    \end{align*}
    with $c_k^\alpha = 1/|\frac{\partial \phi }{\partial z_1}(\tau_k, z_2)|$ is non-zero if and only if $\alpha$ is an exceptional value. It is worth noting that for any RIF $\phi$ of bidegree $(n,1)$, a value $\alpha \in \mathbb{T}$ is exceptional if and only if it is the non-tangential value of $\phi$ at some singularity (see Section 3 of \cite{clark1}).

\begin{example}
\label{ex:rif}
    For an explicit example, we use Example 5.2 from \cite{clark1}: let $\phi = \frac{\tilde{p}}{p}$ for 
    \begin{align*}
        p(z) = 4-z_2-3z_1-z_1z_2+z_1^2 \quad \text{and} \quad \tilde{p}(z) = 4z_1^2z_2 - z_1^2- 3z_1 z_2- z_1 + z_2.
    \end{align*}
    Observe that $\phi$ has only one singularity, which occurs at $(1,1)$. For each $\alpha \in \mathbb{T}$, the formulas above yield 
    \begin{align*}
        B_\alpha (z ) = \frac{4z_1^2 - 3z_1 + 1 +\alpha + \alpha z_1}{4 \alpha - 3z_1 \alpha + z_1^2 \alpha +z_1^2 + z_1}
    \end{align*}
    and
    $$ W_\alpha ( \zeta) = \frac{4|\zeta-1|^4}{|4 \zeta^2 - 3 \zeta + 1 + \alpha + \alpha \zeta|^2}.
    $$
    We see that $\alpha = -1$ is an exceptional value, as $\phi = -1$ is solved by $(1, z_2)$ as well as $\big(z_1, \frac{1}{B_{-1}(z_1)}\big) = (z_1, 1/z_1)$. Since $\phi$ only has one singularity, this point gives rise to the only exceptional value and $\phi^*(1,1) = -1$. Hence, for $\alpha \neq -1$, we have
    \begin{align*}
        \int_{\mathbb{T}^2} f( \xi) d \sigma_\alpha ( \xi ) = \int_\mathbb{T} f( \zeta, \overline{B_\alpha ( \zeta)})   \frac{4|\zeta-1|^4}{|4 \zeta^2 - 3 \zeta + 1 + \alpha + \alpha \zeta|^2} dm(\zeta).
    \end{align*}
    Moreover, we see that $W_{-1} ( \zeta ) = \frac{1}{4}|\zeta-1|^2$ and $\frac{\partial \phi }{\partial z_1}(1,z_2) = -2$, which yields 
    \begin{align*}
        \int_{\mathbb{T}^2} f( \xi) d \sigma_{-1} ( \xi ) =   \frac{1}{4} \int_\mathbb{T} f( \zeta, \overline{\zeta})|\zeta-1|^2 dm(\zeta) + \frac{1}{2}\int_{\mathbb{T}} f( 1, \zeta) dm (\zeta)
    \end{align*}
    for $\alpha = -1$. In Figure \ref{fig:rif}, we have plotted the level curves corresponding to different $\alpha$-values.
    
\begin{figure}
    \centering
    \includegraphics[width=7cm]{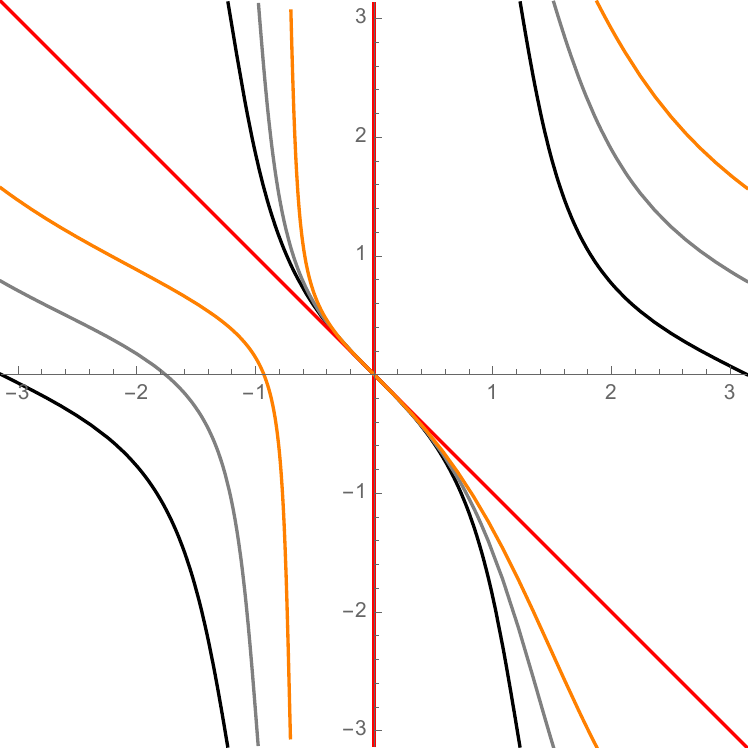}
    \caption{Level curves in Example \ref{ex:rif} for $\alpha = 1$ (black), $\alpha = e^{i\pi/4}$ (gray), $\alpha = e^{i \pi/2}$ (orange), and exceptional value $\alpha = - 1$ (red). }
    \label{fig:rif}
\end{figure}
\end{example}


\section{Multiplicative embeddings}
\label{sec:mult}

Given an inner function $\phi$ in one complex variable, we define the multiplicative embedding
$$ \Phi(z) = \Phi(z_1, z_2) := \phi(z_1 z_2), \quad z \in \mathbb{D}^2.
$$
The function defined by $(z_1, z_2) \mapsto z_1 z_2$ maps $\mathbb{D}^2$ to $\mathbb{D}$, and so $\phi$ being an inner function implies that $\Phi$ is inner as well. In the following proposition, we characterize the support set of $\Phi$ with the help of the original function $\phi$. 
\begin{proposition}
    Let $\phi(z)$ be an inner function in one variable, and $\alpha \in \mathbb{T}$. Define $\Phi(z_1,z_2) := \phi(z_1 z_2) $. Then 
    \begin{align*}
        \mathcal{C}_\alpha ( \Phi) = \bigcup_{\zeta \in \mathcal{C}_\alpha( \phi ) } \{ (z, \zeta \overline{z} ): z \in \mathbb{T} \}. 
    \end{align*}
    \label{cformula}
\end{proposition}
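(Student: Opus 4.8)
The plan is to first reduce the radial limits of $\Phi$ to those of $\phi$ via a trivial reparametrization, and then to carry out the closure operation carefully, since that is where the only real content lies.

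First I would observe that for $\zeta = (\zeta_1, \zeta_2) \in \mathbb{T}^2$ one has $\Phi(r\zeta) = \phi\big((r\zeta_1)(r\zeta_2)\big) = \phi(r^2 \zeta_1 \zeta_2)$. Writing $w := \zeta_1 \zeta_2 \in \mathbb{T}$ and noting that $r \mapsto r^2$ is an increasing bijection of $[0,1)$, the radial limit $\lim_{r\to1^-}\Phi(r\zeta)$ exists and equals $\alpha$ precisely when $\lim_{s\to1^-}\phi(sw)$ does, with the same value. Hence the set appearing before the closure in the definition of $\mathcal{C}_\alpha(\Phi)$ is exactly $\{(\zeta_1,\zeta_2)\in\mathbb{T}^2 : \zeta_1\zeta_2 \in E\}$, where $E := \{w\in\mathbb{T} : \lim_{s\to1^-}\phi(sw) = \alpha\}$ is the corresponding pre-closure set for $\phi$, so that $\mathcal{C}_\alpha(\phi) = \text{Clos}(E)$.

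Next I would rewrite this fiber-wise: for a fixed $w \in \mathbb{T}$, the solutions of $\zeta_1 \zeta_2 = w$ on $\mathbb{T}^2$ form the antidiagonal circle $\{(z, w\overline{z}) : z \in \mathbb{T}\}$, since $\zeta_2 = w\overline{\zeta_1}$ when $|\zeta_1| = 1$. Thus the pre-closure set of $\mathcal{C}_\alpha(\Phi)$ equals $\bigcup_{w\in E}\{(z, w\overline{z}):z\in\mathbb{T}\} = F^{-1}(E)$, where $F(z_1,z_2) := z_1 z_2$ is continuous on $\mathbb{T}^2$. The proposition then reduces to the topological identity $\text{Clos}(F^{-1}(E)) = F^{-1}(\text{Clos}(E))$, because $F^{-1}(\text{Clos}(E)) = \bigcup_{\zeta\in\mathcal{C}_\alpha(\phi)}\{(z,\zeta\overline{z}):z\in\mathbb{T}\}$.

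The hard part, such as it is, is this closure step, which I would settle by proving both inclusions. Since $F$ is continuous, $F^{-1}(\text{Clos}(E))$ is closed and contains $F^{-1}(E)$, giving $\text{Clos}(F^{-1}(E)) \subseteq F^{-1}(\text{Clos}(E))$ immediately. For the reverse inclusion I would exploit the explicit fiber structure: given $(\zeta_1,\zeta_2)$ with $w := \zeta_1\zeta_2 \in \text{Clos}(E)$, choose $w_n \in E$ with $w_n \to w$ and consider the points $(\zeta_1, w_n \overline{\zeta_1})$, each of which lies in $F^{-1}(E)$ since their product is $w_n$; these converge to $(\zeta_1, w\overline{\zeta_1}) = (\zeta_1,\zeta_2)$, placing it in $\text{Clos}(F^{-1}(E))$. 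The point requiring genuine care is precisely that one cannot approximate an arbitrary point in a fiber over $\text{Clos}(E)$ by moving freely within the union of fibers over $E$; one must keep one coordinate fixed and perturb only through the base, which is exactly what the choice $(\zeta_1, w_n\overline{\zeta_1})$ accomplishes. Combining the two inclusions and substituting back the description of $F^{-1}(\text{Clos}(E))$ yields the stated formula for $\mathcal{C}_\alpha(\Phi)$.
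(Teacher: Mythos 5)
Your proof is correct and follows essentially the same route as the paper: both reduce the radial limit of $\Phi$ at $(\zeta_1,\zeta_2)$ to that of $\phi$ at $\zeta_1\zeta_2$, and both handle the closure by the same two sequence arguments (continuity of the product map for one inclusion, and perturbing only the base point $w_n \to w$ while keeping $\zeta_1$ fixed for the other). Your packaging of the closure step as the identity $\mathrm{Clos}(F^{-1}(E)) = F^{-1}(\mathrm{Clos}(E))$ for the continuous product map $F$ is a slightly cleaner way of organizing exactly the arguments the paper carries out by hand.
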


\begin{proof}
    First, for ease of notation, define 
    $$\mathcal{C}_\alpha ' (f) := \Bigl\{ \zeta \in \mathbb{T}^d: \lim_{r \to 1^-} f(r \zeta) = \alpha \Bigl\} $$
    for any inner function $f$, so that $\mathcal{C}_\alpha (f) = \text{Clos} (\mathcal{C}_\alpha ' (f)) $. 
    
    Let $\zeta \in \mathcal{C}_\alpha ' ( \phi )$. Then we know that $\lim_{r \to 1^-} \phi(r \zeta ) = \alpha. 
    $
    For every $z \in \mathbb{T}$, 
    \begin{align*}
        \lim_{r \to 1^-} \Phi(r (z, \zeta \overline{z})) = \lim_{r \to 1^-} \phi (r^2 \zeta z \overline{z} ) = \lim_{r \to 1^-} \phi (r \zeta) = \alpha,
    \end{align*}
    implying that $(z, \zeta \overline{z} ) \in \mathcal{C}_\alpha ( \Phi )$. Thus 
    \begin{align*}
        \bigcup_{\zeta \in \mathcal{C}_\alpha'( \phi ) } \{ (z, \zeta \overline{z} ): z \in \mathbb{T} \} \subset \mathcal{C}_\alpha ( \Phi ).
    \end{align*}
    To extend this to a union over $\mathcal{C}_\alpha ( \phi)$, let $\zeta \in \mathcal{C}_\alpha ( \phi ) $. Then there exists some sequence $ ( \zeta_n)_{n\geq1} $ in $\mathcal{C}_\alpha'( \phi )$ that converges to $\zeta $ as $n$ tends to infinity. This also implies that for any $z \in \mathbb{T}$, $(z, \zeta_n \overline{z}) \to (z, \zeta \overline{z}) \in \mathcal{C}_\alpha (\Phi) $ as $n \to \infty$. Hence, 
     \begin{align*}
        \bigcup_{\zeta \in \mathcal{C}_\alpha( \phi ) } \{ (z, \zeta \overline{z} ): z \in \mathbb{T} \} \subset \mathcal{C}_\alpha ( \Phi ).
    \end{align*}
    
    Conversely, let $(z_1, z_2) \in \mathcal{C}_\alpha '( \Phi )$, so 
    \begin{align*}
        \lim_{r \to 1^-} \Phi(r (z_1, z_2)) = \lim_{r \to 1^-} \phi (r^2 z_1z_2 ) = \alpha.
    \end{align*}
    Then $\zeta := z_1 z_2 \in  \mathcal{C}_\alpha ( \phi )$. Since $z_1, z_2 \in \mathbb{T}$, we may write 
    \begin{align*}
        z_2 = \frac{\zeta}{z_1} = \zeta \overline{z_1}, 
    \end{align*}
    so $(z_1, z_2) = (z_1, \zeta \overline{z_1} ) \in \{ (z, \zeta \overline{z}) : z \in \mathbb{T} \}$. Hence, 
    \begin{align*}
         \mathcal{C}_\alpha' ( \Phi ) \subset \bigcup_{\zeta \in \mathcal{C}_\alpha( \phi ) } \{ (z, \zeta \overline{z} ): z \in \mathbb{T} \}.
    \end{align*}
    Now let $(z_1, z_2) \in \mathcal{C}_\alpha ( \Phi) $. Then there is some sequence of $(z_{1,n}, z_{2,n} )$ in $\mathcal{C}_\alpha ' ( \Phi) $ converging to $(z_1, z_2) $ as $n \to \infty$. But this implies that $z_{1,n} z_{2,n} \to z_1 z_2 \in \mathcal{C}_\alpha( \phi ) $, and the same argument as above then yields 
    \begin{align*}
        \mathcal{C}_\alpha ( \Phi ) \subset \bigcup_{\zeta \in \mathcal{C}_\alpha( \phi ) } \{ (z, \zeta \overline{z} ): z \in \mathbb{T} \}.
    \end{align*} 
    \end{proof}
As in the RIF case, the unimodular level sets of this class of functions may be expressed as unions of curves. However, as opposed to in Lemma \ref{parameter}, the unions need not be finite --- or even countable --- here. 
\begin{remark}
    Observe that by Lemma 2.2 in \cite{clark2}, any positive, pluriharmonic, $m_d$-singular probability measure defines the Clark measure of some inner function. Hence, there exist Clark measures with significantly more intricate supports than what we have seen so far. 
    \label{skeptic}
\end{remark}
A natural next step is to investigate whether we can characterize the density of a given Clark measure $\tau_\alpha$ on the antidiagonals in $\mathcal{C}_\alpha ( \Phi )$. We do this in the next result and its subsequent corollary: 
\begin{theorem}
    \label{zw-thm}
    Let $\phi(z)$ be an inner function in one variable, with Clark measure $\sigma_\alpha$ for some unimodular constant $\alpha$. Let $\tau_\alpha$ be the corresponding Clark measure of $\Phi(z_1, z_2) := \phi(z_1 z_2)$. Then, for any function $f \in C(\mathbb{T}^2)$, 
    \begin{align*}
        \int_{\mathbb{T}^2} f( \xi ) d\tau_\alpha ( \xi ) = \int_\mathbb{T} \bigg( \int_\mathbb{T} f(\zeta, x \overline{\zeta}) d \sigma_\alpha(x) \bigg) dm(\zeta). 
    \end{align*}
\end{theorem}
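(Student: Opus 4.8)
The plan is to identify the measure defined by the right-hand side and verify that it is $\tau_\alpha$ by matching Poisson integrals. Concretely, I would let $\nu_\alpha$ be the finite positive Borel measure on $\mathbb{T}^2$ determined, via the Riesz representation theorem, by the bounded positive functional $f \mapsto \int_{\mathbb{T}} \bigl( \int_{\mathbb{T}} f(\zeta, x\overline{\zeta})\, d\sigma_\alpha(x) \bigr) dm(\zeta)$ on $C(\mathbb{T}^2)$. This is well defined since $\sigma_\alpha$ is finite and $m$ is a probability measure, and since $(\zeta, x) \mapsto f(\zeta, x\overline{\zeta})$ is jointly continuous, so Tonelli/Fubini applies. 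It then suffices to show that $P[d\nu_\alpha](z) = \tfrac{1-|\Phi(z)|^2}{|\alpha - \Phi(z)|^2}$ for every $z \in \mathbb{D}^2$, because the right-hand side is by definition $P[d\tau_\alpha](z)$, and two finite measures with identical Poisson integrals must coincide. Equivalently, since I have matched $\nu_\alpha$ and $\tau_\alpha$ against every Poisson kernel $P_z$, they agree on the dense span of Poisson kernels from Lemma \ref{density} and hence as functionals on $C(\mathbb{T}^2)$.

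For the computation I would take $f = P_z$ and use the product structure of the kernel to write $P[d\nu_\alpha](z) = \int_{\mathbb{T}} P_{z_1}(\zeta) \bigl( \int_{\mathbb{T}} P_{z_2}(x\overline{\zeta})\, d\sigma_\alpha(x) \bigr) dm(\zeta)$. The key algebraic observation is that for $\zeta, x \in \mathbb{T}$ one has $|x\overline{\zeta} - z_2| = |x - z_2\zeta|$ and $1 - |z_2|^2 = 1 - |z_2\zeta|^2$, so the inner integrand equals exactly the one-variable Poisson kernel $P_{z_2\zeta}(x)$ evaluated at the point $w = z_2\zeta \in \mathbb{D}$. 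By the defining Herglotz property of $\sigma_\alpha$, the inner integral therefore collapses to $\tfrac{1-|\phi(z_2\zeta)|^2}{|\alpha - \phi(z_2\zeta)|^2}$.

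It remains to handle the outer integral. For fixed $z_2 \neq 0$ I would set $u(w) := \tfrac{1-|\phi(z_2 w)|^2}{|\alpha - \phi(z_2 w)|^2} = \Re \tfrac{\alpha + \phi(z_2 w)}{\alpha - \phi(z_2 w)}$. Since $w \mapsto \phi(z_2 w)$ is holomorphic on $\{|w| < 1/|z_2|\}$, a disc strictly containing $\overline{\mathbb{D}}$, and the denominator stays bounded below (as $z_2 w$ remains in a compact subset of $\mathbb{D}$ where $|\phi| < 1$), the function $u$ is positive and harmonic in a neighborhood of $\overline{\mathbb{D}}$. Consequently the Poisson integral of its boundary values reproduces it, giving $\int_{\mathbb{T}} P_{z_1}(\zeta) u(\zeta)\, dm(\zeta) = u(z_1) = \tfrac{1-|\phi(z_1 z_2)|^2}{|\alpha - \phi(z_1 z_2)|^2} = \tfrac{1-|\Phi(z)|^2}{|\alpha - \Phi(z)|^2}$, which is precisely $P[d\nu_\alpha](z)$, completing the verification.

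The one step requiring genuine care, and which I regard as the main obstacle, is this last reconstruction: for a \emph{general} positive harmonic function the Poisson integral of its almost-everywhere boundary values can omit a singular component, so one cannot invoke the Poisson formula blindly. It is legitimate here precisely because composition with $z_2$ contracts $\overline{\mathbb{D}}$ into a compact subdisc, forcing $u$ to be harmonic across $\mathbb{T}$; then $u$ is the unique Dirichlet solution with data $u|_{\mathbb{T}}$ and has no singular part. The degenerate case $z_2 = 0$ is immediate, since $u$ is then constant and equals $\Phi(z) = \phi(0)$. The remaining points — joint measurability for the Fubini step and boundedness of the defining functional — are routine.
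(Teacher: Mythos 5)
Your proposal is correct and follows essentially the same route as the paper: reduce to Poisson kernels via Lemma \ref{density}, use the identity $P_{z_2}(x\overline{\zeta}) = P_{\zeta z_2}(x)$ together with the Herglotz definition of $\sigma_\alpha$ to collapse the inner integral, and then recover $\tfrac{1-|\Phi(z)|^2}{|\alpha-\Phi(z)|^2}$ by applying the one-variable Poisson integral formula to $\zeta \mapsto \tfrac{1-|\phi(\zeta z_2)|^2}{|\alpha-\phi(\zeta z_2)|^2}$, which is legitimate because $\zeta z_2$ stays in a compact subset of $\mathbb{D}$. Your justification of that last step (harmonicity in a neighborhood of $\overline{\mathbb{D}}$, plus the separate $z_2=0$ case) is a slightly more explicit version of the paper's continuity-up-to-the-boundary argument, but it is the same proof.
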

\begin{proof}
    We first prove this in the case when $f$ is the product of one-variable Poisson kernels. Fixing $z_2 \in \mathbb{D}$, let
    \begin{align*}
        u_{z_2}( z_1) := \frac{1-|\phi(z_1 z_2)|^2}{|\alpha-\phi(z_1 z_2)|^2} = \int_{\mathbb{T}^2} P_{z_1}( \xi_1) P_{z_2}( \xi_2) d \tau_\alpha(\xi), \quad z_1 \in \mathbb{D}. 
    \end{align*}
    As the middle expression is pluriharmonic, $u_{z_2}$ must be harmonic on $\mathbb{D}$. Since $z_1 z_2 \in \mathbb{D}$ for any $z_1 \in \overline{\mathbb{D}}$, and $\phi$ is analytic (and hence continuous) on $\mathbb{D}$, we see that $\Phi(z_1, z_2) = \phi(z_1 z_2)$ as a function of $z_1$ is continuous on $\overline{\mathbb{D}}$. Moreover, by the maximum principle, $|\phi| < 1$ on the unit disc, which implies that the denominator will always be non-zero. We conclude that $u_{z_2}$ is continuous in $\overline{\mathbb{D}}$, and we may thus apply the Poisson integral formula: 
    \begin{align}
        u_{z_2}( z_1 ) =  \int_\mathbb{T} \frac{1-|\phi(\zeta z_2)|^2}{|\alpha-\phi(\zeta z_2)|^2} P_{z_1}( \zeta) dm(\zeta).
        \label{pif}
    \end{align}
    Moreover, for $\zeta \in \mathbb{T}$, we see that
    \begin{align*}
    \int_{\mathbb{T}} P_{z}( \zeta, x \overline{\zeta} ) d \sigma_\alpha(x) &= \int_{\mathbb{T}} P_{z_1}( \zeta) P_{z_2} (x \overline{\zeta} ) d \sigma_\alpha(x)\\
    &= P_{z_1}( \zeta) \int_{\mathbb{T}} \frac{1-|z_2|^2}{|x \overline{\zeta}- z_2|^2} d\sigma_\alpha(x) \\
    &= P_{z_1}( \zeta) \int_{\mathbb{T}} \frac{1-|\zeta z_2|^2}{|x- \zeta z_2|^2}  d\sigma_\alpha(x) \\
    &= P_{z_1}( \zeta) \int_{\mathbb{T}} P_{\zeta z_2}(x) d\sigma_\alpha(x) \\
    &= P_{z_1} ( \zeta) \frac{1-|\phi( \zeta z_2)|^2 }{| \alpha - \phi(\zeta z_2)|^2},
    \end{align*}
    where we use the definition of the Clark measure $\sigma_\alpha$ in the last step. By integrating the above and applying \eqref{pif}, we get 
    \begin{align*}
        \int_\mathbb{T} \bigg( \int_{\mathbb{T}} P_{z} ( \zeta, x \overline{\zeta} ) d \sigma_\alpha(x)  \bigg) dm(\zeta) &= \int_\mathbb{T} \frac{1-|z_1|^2}{|\zeta-z_1|^2} \frac{1-|\phi(\zeta z_2)|^2}{|\alpha-\phi(\zeta z_2)|^2} dm(\zeta) \\
        &= \frac{1-|\phi(z_1 z_2)|^2}{|\alpha-\phi(z_1 z_2)|^2} \\
        &= \int_{\mathbb{T}^2} P_{z_1}( \xi_1) P_{z_2}( \xi_2) d \tau_\alpha(\xi).
    \end{align*}
    Now apply Lemma \ref{density} to obtain the final result. \end{proof}

\begin{remark}
It is a priori not obvious that $f( \zeta, x \overline{\zeta})$ is integrable with respect to $\sigma_\alpha$. Integrability is ensured by the fact that $f( \zeta, x \overline{\zeta})$ is continuous on $\mathbb{T}$, as it is composed by two functions $f$ and $g_x(z) := ( z, x \overline{z})$ which are continuous there. Since $\sigma_\alpha$ is a finite, positive Borel measure on a compact space, all continuous functions on said space are integrable with respect to $\sigma_\alpha$. 
\end{remark}

In particular, when the Clark measures associated to $\phi$ are discrete, one gets the following result:
\begin{corollary}  Let $\phi: \mathbb{D} \to \mathbb{C}$ be an inner function with Clark measure $\sigma_\alpha$ for some unimodular constant $\alpha$, and let $\tau_\alpha$ be the Clark measure of $\Phi(z_1, z_2) := \phi(z_1 z_2)$. If $\sigma_\alpha$ is supported on a countable collection of points $\{ \eta_k \}_{k \geq 1} \subset \mathcal{C}_\alpha ( \phi ) $, then
\begin{align*}
    \int_{\mathbb{T}^2} f( \xi ) d\tau_\alpha ( \xi ) =  \sum_{k\geq1} \int_\mathbb{T} f(\zeta, \eta_k \overline{\zeta})  \frac{ dm(\zeta)}{|\phi'(\eta_k)|} 
\end{align*}
for all $f \in C(\mathbb{T}^2)$. 
\label{zw_formula}
\end{corollary}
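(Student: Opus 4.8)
The plan is to derive Corollary \ref{zw_formula} as a direct specialization of Theorem \ref{zw-thm} to the case where $\sigma_\alpha$ is a discrete measure. Theorem \ref{zw-thm} already establishes that for any $f \in C(\mathbb{T}^2)$,
\begin{align*}
    \int_{\mathbb{T}^2} f( \xi ) d\tau_\alpha ( \xi ) = \int_\mathbb{T} \bigg( \int_\mathbb{T} f(\zeta, x \overline{\zeta}) d \sigma_\alpha(x) \bigg) dm(\zeta),
\end{align*}
so the only task is to evaluate the inner integral against $\sigma_\alpha$ under the hypothesis that $\sigma_\alpha$ is supported on a countable set $\{\eta_k\}_{k \geq 1}$.

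First I would invoke Proposition \ref{one_var pointmass} to identify the point masses of $\sigma_\alpha$. Since $\sigma_\alpha$ is supported on the countable collection $\{\eta_k\}_{k \geq 1} \subset \mathcal{C}_\alpha(\phi)$, the measure is purely atomic and we may write $\sigma_\alpha = \sum_{k \geq 1} \sigma_\alpha(\{\eta_k\}) \delta_{\eta_k}$. By Proposition \ref{one_var pointmass}, each atom satisfies $\phi^*(\eta_k) = \alpha$, the angular derivative $\phi'(\eta_k)$ exists finitely in the sense of Carathéodory, and the weight is given by $\sigma_\alpha(\{\eta_k\}) = 1/|\phi'(\eta_k)|$. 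Substituting this atomic representation into the inner integral of Theorem \ref{zw-thm} gives
\begin{align*}
    \int_\mathbb{T} f(\zeta, x \overline{\zeta}) d\sigma_\alpha(x) = \sum_{k \geq 1} f(\zeta, \eta_k \overline{\zeta}) \frac{1}{|\phi'(\eta_k)|},
\end{align*}
and then integrating in $\zeta$ over $\mathbb{T}$ and interchanging the sum and the integral yields the claimed formula.

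The one point requiring care is the interchange of summation and integration when passing the $dm(\zeta)$ integral inside the sum over $k$. Here I would appeal to the Fubini–Tonelli theorem: since $f$ is continuous on the compact set $\mathbb{T}^2$ it is bounded, say by $\|f\|_\infty$, and the total mass $\sum_{k \geq 1} |\phi'(\eta_k)|^{-1} = \sigma_\alpha(\mathbb{T})$ is finite because $\sigma_\alpha$ is a finite measure (as noted in the preliminaries, $\sigma_\alpha(\mathbb{T}) = (1-|\phi(0)|^2)/|\alpha-\phi(0)|^2 < \infty$). Thus the double integral of $|f(\zeta, \eta_k \overline{\zeta})| \, |\phi'(\eta_k)|^{-1}$ against the product of $m$ and counting measure is dominated by $\|f\|_\infty \, \sigma_\alpha(\mathbb{T}) < \infty$, justifying the interchange and producing the final expression. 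I expect no genuine obstacle here; the result is essentially a bookkeeping exercise once Theorem \ref{zw-thm} and Proposition \ref{one_var pointmass} are in hand, with the only subtlety being the routine appeal to absolute convergence to swap sum and integral.
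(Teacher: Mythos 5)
Your proposal is correct and follows essentially the same route as the paper: specialize Theorem \ref{zw-thm} by writing $\sigma_\alpha$ as the atomic measure $\sum_{k\geq 1}|\phi'(\eta_k)|^{-1}\delta_{\eta_k}$ via Proposition \ref{one_var pointmass}, then interchange sum and integral. The only (harmless) difference is that the paper re-runs the computation for Poisson kernels and invokes the density Lemma \ref{density} once more, justifying the interchange by positivity of the summands, whereas you work directly with a general $f\in C(\mathbb{T}^2)$ and justify the interchange by the bound $\|f\|_\infty\,\sigma_\alpha(\mathbb{T})<\infty$; both are valid and yours is slightly more economical.
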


\begin{proof}
    By Proposition \ref{one_var pointmass}, $\sigma_\alpha$ having a point mass at some $\eta_k$ implies that $\sigma_\alpha( \{ \eta_k \}) =1 / |\phi'(\eta_k)|$. Then, following the steps in the proof of Theorem \ref{zw-thm}, 
     \begin{align*}
    \int_{\mathbb{T}} P_{z}( \zeta, x \overline{\zeta} ) d \sigma_\alpha(x) 
    &= P_{z_1}( \zeta) \int_{\mathbb{T}}  \Bigg( \sum_{k\geq1} \frac{1}{|\phi'(\eta_k)|}
    P_{\zeta z_2}(x) \Bigg) d \delta_{\eta_k}(x). \end{align*}
    This then reduces to
    \begin{align*}
    P_{z_1}( \zeta) \sum_{k\geq1} \frac{1}{|\phi'(\eta_k)|} 
    P_{\zeta z_2}(\eta_k) =  \sum_{k\geq1} \frac{1}{|\phi'(\eta_k)|}  P_{z_1}( \zeta) P_{z_2}( \eta_k \overline{\zeta}).
    \end{align*}
    Hence,
    \begin{align*}
        \int_{\mathbb{T}} P_{z}( \zeta, x \overline{\zeta} ) d \sigma_\alpha(x) = \sum_{k\geq1}\frac{1}{|\phi'(\eta_k)|} P_{z}( \zeta, \eta_k \overline{\zeta}).
    \end{align*}
    Integrating over this and applying Theorem \ref{zw-thm} then shows that 
    \begin{align*}
        \int_{\mathbb{T}^2} P_{z}( \xi) d \tau_\alpha(\xi) =  \int_\mathbb{T} \Bigg( \sum_{k\geq 1} \frac{1}{|\phi'(\eta_k)|}  P_{z}( \zeta, \eta_k \overline{\zeta}) \Bigg) d m (\zeta) 
        =  \sum_{k\geq 1} 
    \int_\mathbb{T}  P_{z}( \zeta, \eta_k \overline{\zeta}) \frac{d m (\zeta)}{|\phi'(\eta_k)|},
    \end{align*}
    where we have used positivity of the summands in the last step. The result now follows from Lemma \ref{density}. \end{proof}



It is interesting to compare the above result to the corresponding theorems, Theorem \ref{generic} and Theorem \ref{exceptional}, for rational inner functions. In the RIF case, we saw that the weights of Clark measures along the curves in the unimodular level sets were one-variable functions. Corollary \ref{zw_formula} shows that for the multiplicative embeddings, the weights are simpler than their RIF counterparts --- they are constant along each curve in the level sets. This implies that given any univariate inner function $\phi$, regardless of its complextiy, the associated Clark measures of $\phi(z_1 z_2)$ will always be very “well-behaved”, in the sense that they are supported on straight lines and --- when the Clark measures of $\phi$ are discrete --- have constant density along each such line.

\begin{example}
Recall the function 
    \begin{align*}\phi(z) := \exp \biggl(-\frac{1+z}{1-z}\biggr),  \quad z \in \mathbb{D},\end{align*}
    from Example \ref{exp_ex}. We saw there that the solutions to $\phi^*( \zeta ) = 1$ are given by 
   \begin{align*}
        \eta_k = \frac{2 \pi k - i}{2 \pi k + i}, \quad k \in \mathbb{Z},
    \end{align*}
    and 
    \begin{align*}
    \frac{1}{|\phi'(\eta_k)|} = \frac{8}{1+4 \pi^2 k^2}.
    \end{align*}
    Now consider $\Phi(z_1, z_2) := \phi(z_1 z_2)$, which appears in e.g. Example 13.1 in \cite{der_of_rifs}. Applying Corollary \ref{zw_formula} for the Clark measure $\tau_1$ of $\Phi$ then results in 
    \begin{align*} 
   \int_{\mathbb{T}^2} f( \xi) d \tau_1 ( \xi ) =  \sum_{k \in \mathbb{Z}}\frac{8}{1+4 \pi^2 k^2} \int_\mathbb{T}   f( \zeta, \eta_k \overline{\zeta } )dm(\zeta)
\end{align*}
for $f \in C(\mathbb{T}^2)$. This marks our first example of a non-rational bivariate function, for which we can explicitly characterize the Clark measures. Moreover, this is our first example of an inner function whose unimodular level sets consist of infinitely many curves, as opposed to the RIF case. 
\label{exp_ex2}
\end{example}

We now extend this theory to $d$ variables. For an inner function $\phi$ in one variable, define the multiplicative embedding 
$$ \Phi(z) := \phi(z_1 z_2 \cdots z_d), \quad z \in \mathbb{D}^d.
$$
By the same argument as for two variables, this is an inner function. In the next result, we prove a $d$-dimensional version of Theorem \ref{zw-thm}:

\begin{theorem}
    Let $\phi(z): \mathbb{D} \to \mathbb{C}$ be an inner function with Clark measure $\sigma_\alpha$ for some unimodular constant $\alpha$, and let $\tau_\alpha$ be the Clark measure of $\Phi(z_1, \ldots, z_d) := \phi(z_1 z_2 \cdots z_d)$. Then 
    \begin{align*}
        \int_{\mathbb{T}^d} f(\xi) d\tau_\alpha(\xi) =  \int_{\mathbb{T}} \int_{\mathbb{T}}\cdots \int_{\mathbb{T}} f(\zeta_1, \ldots, \zeta_{d-1}, x\overline{\zeta_1 \cdots \zeta_{d-1}} ) d\sigma_\alpha(x) dm(\zeta_{d-1}) \cdots dm(\zeta_1).
    \end{align*}
    \label{d-dim formula}
\end{theorem}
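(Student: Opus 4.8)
The plan is to follow the exact same strategy as in the proof of Theorem \ref{zw-thm}, which handled the case $d=2$, and promote it to arbitrary dimension by an iterated application of the one-variable Poisson integral formula. As before, by Lemma \ref{density} it suffices to verify the identity when $f$ is a product of one-variable Poisson kernels, $f = P_z = \prod_{j=1}^d P_{z_j}$; the general case then follows from density of such products in $C(\mathbb{T}^d)$ together with positivity of the measures involved (which justifies interchanging the order of integration and summation in the limit). So I would reduce immediately to testing against $P_z$.

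\emph{Setting up the kernel computation.} First I would fix $z_2, \ldots, z_d \in \mathbb{D}$ and consider the function
\begin{align*}
    u(z_1) := \frac{1-|\phi(z_1 z_2 \cdots z_d)|^2}{|\alpha-\phi(z_1 z_2 \cdots z_d)|^2} = \int_{\mathbb{T}^d} P_{z_1}(\xi_1) \cdots P_{z_d}(\xi_d)\, d\tau_\alpha(\xi),
\end{align*}
of the single variable $z_1$. Exactly as in Theorem \ref{zw-thm}, since $z_1 \mapsto \phi(z_1 z_2 \cdots z_d)$ is continuous on $\overline{\mathbb{D}}$ and the denominator is bounded away from zero there (by the maximum principle $|\phi|<1$ on $\mathbb{D}$), $u$ is harmonic on $\mathbb{D}$ and continuous on $\overline{\mathbb{D}}$, so the Poisson integral formula applies and expresses $u(z_1)$ as an integral over $\mathbb{T}$ of its boundary values against $P_{z_1}$. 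The key algebraic step is the identity, valid for $\zeta_1 \in \mathbb{T}$,
\begin{align*}
    \int_{\mathbb{T}} P_{z_2}(x\overline{\zeta_1}) \, d\sigma_\alpha(x) = \int_{\mathbb{T}} P_{\zeta_1 z_2}(x)\, d\sigma_\alpha(x) = \frac{1-|\phi(\zeta_1 z_2)|^2}{|\alpha - \phi(\zeta_1 z_2)|^2},
\end{align*}
using $|\zeta_1|=1$ and the defining property of $\sigma_\alpha$, which is precisely the manipulation carried out in the $d=2$ proof. The right-hand side is again a one-variable Poisson-type expression in a product of the remaining variables.

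\emph{The induction.} The cleanest presentation is to induct on $d$. Writing $w := z_2 \cdots z_d$, the innermost integral against $d\sigma_\alpha$ collapses one factor and converts the whole expression into the $(d-1)$-variable problem for $\Phi(z_1, \ldots, z_{d-1}) = \tilde\phi(z_1 \cdots z_{d-1})$ where $\tilde\phi(s) := \phi(s \cdot \zeta_{?})$ — more carefully, I would peel off one integration at a time: the Poisson integral formula in $z_1$ introduces the outer $\int_{\mathbb{T}} \cdots\, dm(\zeta_1)$, and the boundary value $u^*(\zeta_1)$ is the analogous object in the variables $(\zeta_1 z_2, z_3, \ldots, z_d)$, i.e. the same structure with $z_2$ replaced by $\zeta_1 z_2$ and one fewer free disc variable. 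Repeating this $d-1$ times produces the nested integrals $\int_{\mathbb{T}} \cdots \int_{\mathbb{T}}$ over $\zeta_1, \ldots, \zeta_{d-1}$, and the final innermost integration against $d\sigma_\alpha(x)$ leaves the argument $x \overline{\zeta_1 \cdots \zeta_{d-1}}$ in the last slot, matching the claimed formula. Assembling the telescoping product of Poisson kernels at each stage confirms that the integrand is $f(\zeta_1, \ldots, \zeta_{d-1}, x\overline{\zeta_1 \cdots \zeta_{d-1}})$ with $f = P_z$.

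\emph{Main obstacle.} The conceptual content is entirely contained in the $d=2$ case, so the real work is bookkeeping: keeping track of which variables are boundary variables $\zeta_j \in \mathbb{T}$ and which remain disc variables $z_j \in \mathbb{D}$ as the iteration proceeds, and verifying at each stage that the intermediate function is genuinely continuous up to $\overline{\mathbb{D}}$ in the next variable so that the Poisson integral formula is legitimately applicable. The latter holds for the same reason as in the base case (continuity of $\phi$ on the closed disc after composing with the product map, plus the denominator staying nonzero), but one must confirm it survives after $\zeta_1, \ldots, \zeta_{j-1}$ have been fixed on $\mathbb{T}$ — this is immediate since $|\zeta_1 \cdots \zeta_{j-1}| = 1$ means $\zeta_1 \cdots \zeta_{j-1} z_j \cdots z_d$ still ranges in $\overline{\mathbb{D}}$ with modulus strictly less than one off the boundary. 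Finally, I would remark (as in the remark following Theorem \ref{zw-thm}) that the integrand is continuous, hence $\sigma_\alpha$-integrable, so all the Fubini interchanges and the final density argument are justified.
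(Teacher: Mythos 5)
Your proposal is correct and follows essentially the same route as the paper: induction on $d$ with the two-variable theorem as base case, applying the one-variable Poisson integral formula in $z_1$, recognizing the boundary value as the $(d-1)$-variable problem for $\phi(\,\cdot\,\zeta_1)$, and using $|\zeta_1|=1$ to absorb the rotation into the conjugate product, before concluding by density of spans of Poisson kernels. No gaps to report.
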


\begin{proof}
    We prove the result by induction, where Theorem \ref{zw-thm} marks our base case. As usual, we prove the formula for Poisson kernels first.

    We begin by introducing some notation: for $n < m$, let
    $$ \bold{z}_n^m := z_n \cdots z_m
    $$
    for $z_j \in \mathbb{D}, n \leq j \leq m$. Note that $\Phi(z) = \phi(\bold{z}_1^d)$ per definition.
    
    Suppose the formula holds for $\phi(z_1 \cdots z_{d-1})= \phi(\bold{z}_1^{d-1})$, in which case
    \begin{align*}
        \frac{1-|\phi(\bold{z}_1^{d-1})|^2}{|\alpha - \phi(\bold{z}_1^{d-1})|^2} = \int_{\mathbb{T}} \int_{\mathbb{T}} \cdots \int_{\mathbb{T}} P_{z_1}(\zeta_1) \cdots P_{z_{d-1}}(x\overline{\zeta_1 \zeta_2 \cdots \zeta_{d-2}} ) d\sigma_\alpha(x) dm(\zeta_{d-2}) \cdots dm(\zeta_1).
    \end{align*}
    We now want to show the result for $d$ variables. Fix $z_2, \ldots, z_d$ and define the one-variable function
$$u(z_1) :=  \frac{1-|\phi(\bold{z}_1^{d})|^2}{|\alpha - \phi(\bold{z}_1^{d})|^2} = \frac{1-|\phi(z_1 \cdot \bold{z}_2^{d})|^2}{|\alpha - \phi(z_1 \cdot \bold{z}_2^{d})|^2}, \quad z_1 \in \mathbb{D}.
$$
By the same argument as in the proof of Theorem \ref{zw-thm}, this function is harmonic on the unit disc and continuous on its closure. Hence, we may apply the Poisson integral formula: 
\begin{align} \frac{1-|\phi(\bold{z}_1^{d})|^2}{|\alpha - \phi(\bold{z}_1^{d})|^2} = \int_{\mathbb{T}^d} P_{z_1}(\zeta_1) \frac{1-|\phi(\bold{z}_2^d \cdot \zeta_1)|^2}{|\alpha - \phi(\bold{z}_2^d \cdot \zeta_1)|^2} 
\label{poisson eq d-dim}
\end{align}
For fixed $\zeta_1 \in \mathbb{T}$, define $\phi_{\zeta_1}(\bold{z}_2^d) := \phi(\bold{z}_2^d \cdot \zeta_1)$. Then, by our induction assumption, it holds that
\begin{align*}
    \frac{1-|\phi_{\zeta_1}(\bold{z}_2^d)|^2}{|\alpha - \phi_{\zeta_1}(\bold{z}_2^d)|^2} &= \int_{\mathbb{T}} \int_{\mathbb{T}} \cdots \int_{\mathbb{T}} P_{z_2}(\zeta_2) \cdots P_{\zeta_1 z_{d-1}}(x\overline{\zeta_2 \cdots \zeta_{d-1}} ) d\sigma_\alpha(x) dm(\zeta_{d-1}) \cdots dm(\zeta_2) \\
    &= \int_{\mathbb{T}} \int_{\mathbb{T}} \cdots \int_{\mathbb{T}} P_{z_2}(\zeta_2) \cdots P_{z_{d-1}}(x\overline{\zeta_1 \zeta_2 \cdots \zeta_{d-1}} ) d\sigma_\alpha(x) dm(\zeta_{d-1}) \cdots dm(\zeta_2).
\end{align*}
Finally, by \eqref{poisson eq d-dim}, we arrive at 
\begin{align*}
    \frac{1-|\phi(\bold{z}_1^{d})|^2}{|\alpha - \phi(\bold{z}_1^{d})|^2} = 
    \int_{\mathbb{T}} \int_{\mathbb{T}} \cdots \int_{\mathbb{T}} P_{z_1}(\zeta_1) \cdots P_{z_{d-1}}(x\overline{\zeta_1 \cdots \zeta_{d-1}}) d\sigma_\alpha(x) dm(\zeta_{d-1}) \cdots dm(\zeta_1),
\end{align*}
as desired. Application of Lemma \ref{density} yields the final result. 
\end{proof}
Similarly to in the two-variable case, this gives us a sense of the geometry of $\supp\{ \tau_\alpha \}$. For example, for $d=3$ and $x = e^{i \nu} \in \mathbb{T}$, the set
$$ \{ (\zeta_1, \zeta_2, x \overline{\zeta_1 \zeta_2}): \zeta_1, \zeta_2 \in \mathbb{T} \} = \{ (e^{is}, e^{it}, e^{i(\nu -s-t)} ): -\pi \leq s,t \leq \pi \}$$
has logarithmic coordinates $(s, t, \nu -s-t)$, which defines a plane in $\mathbb{T}^3$.

As in the case of $d=2$, we get the following consequence when $\sigma_\alpha$ is discrete: 
\begin{corollary}
    \label{d-dim cor}
    Let $\phi(z): \mathbb{D} \to \mathbb{C}$ be an inner function with Clark measure $\sigma_\alpha$ for some unimodular constant $\alpha$, and let $\tau_\alpha$ be the Clark measure of $\Phi(z_1, \ldots, z_d) := \phi(z_1 z_2 \cdots z_d)$. If $\sigma_\alpha$ is supported on a countable collection of points $\{\eta_k \}_{k \geq 1} \subset \mathcal{C}_\alpha (\phi)$, then 
    \begin{align*}
        \int_{\mathbb{T}^d} f(\xi) d\tau_\alpha(\xi) = \sum_{k \geq 1} \int_{\mathbb{T}} \cdots \int_{\mathbb{T}} f(\zeta_1, \ldots, \zeta_{d-1}, \overline{\zeta_1 \cdots \zeta_{d-1}} \eta_k) dm(\zeta_{d-1})  \cdots  \frac{dm(\zeta_1)}{|\phi'(\eta_k)|}.
    \end{align*}
\end{corollary}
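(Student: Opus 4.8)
The plan is to specialize the integral identity of Theorem \ref{d-dim formula} to the discrete setting, following the argument by which Corollary \ref{zw_formula} was deduced from Theorem \ref{zw-thm}. First I would use Proposition \ref{one_var pointmass}: since $\sigma_\alpha$ is carried by the countable set $\{ \eta_k \}_{k \geq 1}$, each atom has mass $\sigma_\alpha(\{\eta_k\}) = 1/|\phi'(\eta_k)|$, so that
\begin{align*}
    \sigma_\alpha = \sum_{k \geq 1} \frac{1}{|\phi'(\eta_k)|} \delta_{\eta_k}.
\end{align*}
Because $\sigma_\alpha$ is finite (as recorded in Section \ref{sec:prel}), the total mass $\sum_{k\geq 1} 1/|\phi'(\eta_k)|$ is finite, which guarantees convergence of every series that appears below.

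As in the proofs of Theorem \ref{d-dim formula} and Corollary \ref{zw_formula}, I would first verify the formula for products of one-variable Poisson kernels $f = P_z$, $z \in \mathbb{D}^d$. Substituting the discrete form of $\sigma_\alpha$ into the innermost integral of Theorem \ref{d-dim formula} turns the integration in $x$ into a sum, evaluating the kernel at $x = \eta_k$ with weight $1/|\phi'(\eta_k)|$:
\begin{align*}
    \int_{\mathbb{T}} P_z(\zeta_1, \ldots, \zeta_{d-1}, x \overline{\zeta_1 \cdots \zeta_{d-1}}) d\sigma_\alpha(x) = \sum_{k \geq 1} \frac{1}{|\phi'(\eta_k)|} P_z(\zeta_1, \ldots, \zeta_{d-1}, \eta_k \overline{\zeta_1 \cdots \zeta_{d-1}}).
\end{align*}
Integrating this identity against $dm(\zeta_{d-1}) \cdots dm(\zeta_1)$ and invoking Theorem \ref{d-dim formula} yields the desired formula for $f = P_z$, provided the sum may be moved outside the $(d-1)$-fold integral.

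The one real subtlety is this interchange of the infinite sum with the iterated integration. For Poisson kernels it is immediate, since every summand is nonnegative: Tonelli's theorem (equivalently, monotone convergence) permits the exchange with no extra hypotheses. This is precisely the point at which the proof of Corollary \ref{zw_formula} appealed to positivity of the summands, and the same reasoning transfers verbatim to $d$ variables. Having established the identity on the linear span $\mathcal{M}$ of Poisson kernels, I would finish by invoking Lemma \ref{density}: $\mathcal{M}$ is dense in $C(\mathbb{T}^d)$, and both sides of the asserted formula define the same bounded linear functional there — each integration against a finite positive measure of total mass $\sigma_\alpha(\mathbb{T})$ — so agreement on $\mathcal{M}$ forces agreement on all of $C(\mathbb{T}^d)$. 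Since the bulk of the work, namely the inductive passage from $d-1$ to $d$ variables, is already carried out in Theorem \ref{d-dim formula}, this corollary requires no genuinely new ideas beyond the specialization above, and the interchange of sum and integral is the only step that demands care.
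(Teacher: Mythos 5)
Your proposal is correct and takes essentially the same route as the paper: the paper proves the $d=2$ case (Corollary \ref{zw_formula}) in exactly this way — atom masses via Proposition \ref{one_var pointmass}, verification on Poisson kernels, interchange of sum and integral justified by positivity, then density via Lemma \ref{density} — and states the $d$-variable corollary as the immediate analogue applied to Theorem \ref{d-dim formula}. No gaps.
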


\section{Product functions}
\label{sec:prod}
Given one-variable inner functions $\phi$ and $\psi$, define the product function
\begin{align*}
    \Psi(z) := \phi(z_1) \psi(z_2), \quad z \in \mathbb{D}^2.
\end{align*}
Then $\Psi$ is an inner function in $\mathbb{D}^2$. The analysis of the Clark measures of $\Psi$ is not as straight-forward as for the multiplicative embeddings. A key argument in the proofs of Theorem \ref{generic}, Theorem \ref{exceptional} and \ref{zw-thm} is the Poisson integral formula. To use this for $\Psi(z_1, z_2)$, we require that for fixed $z_2 \in \mathbb{D}$, the function
\begin{align*}
    u_{z_2}(z_1) := \frac{1- |\phi(z_1)\psi(z_2)|^2}{|\alpha - \phi(z_1) \psi(z_2)|^2}
\end{align*}
is continuous on the closed unit disc. However, for a general inner function $\phi$, its non-tangential limits need only exist $m$-almost everywhere on $\mathbb{T}$. Even if they do exist on the entire unit circle, $\phi^*$ need not be continuous. For this reason, we introduce the function $\Psi_r(z) := \phi(rz_1)\psi(z_2) $ for $0<r<1$. This is not an inner function, as $|\phi(r z_1)|<1$ on the unit circle. However, since $\Psi_r \to \Psi$ as $r \to 1^-$, we can investigate the Clark measures of $\Psi$ via $\Psi_r$.

\begin{theorem}
    Let $\Psi(z_1,z_2) = \phi(z_1) \psi(z_2)$ for one-variable inner functions $\phi$ and $\psi$, such that
    \begin{enumerate}[A.]
        \item $\psi$ extends to be continuously differentiable on $\mathbb{T}$ except at a finite set of points, 
        \item the solutions to $\psi^* = \beta $ for $\beta \in \mathbb{T}$ can be parameterized by functions $\{ g_k( \beta) \}_{k \geq 1}$ which are continuous in $\beta$ on $\mathbb{T}$ except at a finite set of points, 
        \item for every $\beta \in \mathbb{T}$, there are no solutions to $\psi^* = \beta$ with infinite multiplicity, and 
        \item the Clark measures of $\psi$ are all discrete. 
    \end{enumerate}
    Then the Clark measures of $\Psi$ satisfy
    \begin{align*}
        \int_{\mathbb{T}^2} f(\xi) d\sigma_\alpha( \xi) = \sum_{k \geq 1} \int_{\mathbb{T}} f(\zeta, g_k ( \alpha \overline{\phi^* ( \zeta)}) ) \frac{dm(\zeta)}{|\psi'(g_k ( \alpha \overline{\phi^* ( \zeta)}))|}
    \end{align*}
    for $f \in C(\mathbb{T}^2)$.
    \label{prod_formula1}
\end{theorem}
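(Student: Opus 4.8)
The plan is to follow the template of Theorem~\ref{zw-thm}: first verify the formula when $f$ is a product of one-variable Poisson kernels, i.e.\ $f = P_z$ for $z = (z_1,z_2) \in \mathbb{D}^2$, and then extend to all of $C(\mathbb{T}^2)$ by Lemma~\ref{density}. Since the Poisson integral of $\sigma_\alpha$ is $(1-|\Psi(z)|^2)/|\alpha-\Psi(z)|^2$, it suffices to show that plugging $f = P_z$ into the right-hand side of the claimed identity reproduces this expression, that is,
\begin{align*}
\frac{1-|\Psi(z)|^2}{|\alpha - \Psi(z)|^2} = \sum_{k \geq 1} \int_{\mathbb{T}} P_{z_1}(\zeta)\, P_{z_2}\bigl(g_k(\alpha\overline{\phi^*(\zeta)})\bigr)\, \frac{dm(\zeta)}{|\psi'(g_k(\alpha\overline{\phi^*(\zeta)}))|}
\end{align*}
for every $z \in \mathbb{D}^2$. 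Everything in sight is nonnegative, so the interchanges below are justified by Tonelli.

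The first key step is a fiberwise identity in the second variable. Fix $\beta \in \mathbb{T}$ and let $\sigma_\beta^\psi$ denote the one-variable Clark measure of $\psi$ at $\beta$. Assumption D makes $\sigma_\beta^\psi$ discrete, so by Proposition~\ref{one_var pointmass} it is carried by those solutions of $\psi^* = \beta$ with a finite angular derivative, each with mass $1/|\psi'(\cdot)|$. Assumption B identifies \emph{all} solutions of $\psi^* = \beta$ as the points $g_k(\beta)$, and assumption C ensures that each such $g_k(\beta)$ really does carry a finite angular derivative (no solution of infinite multiplicity), hence is a genuine atom of positive mass. Consequently
\begin{align*}
\frac{1-|\psi(z_2)|^2}{|\beta - \psi(z_2)|^2} = \int_{\mathbb{T}} P_{z_2}(x)\, d\sigma_\beta^\psi(x) = \sum_{k \geq 1} \frac{P_{z_2}(g_k(\beta))}{|\psi'(g_k(\beta))|}
\end{align*}
holds for every $\beta \in \mathbb{T}$ and every $z_2 \in \mathbb{D}$.

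The second key step is a Poisson representation in the first variable, and this is where $\Psi_r$ enters. Fix $z_2$ and write $w := \psi(z_2)$, so $|w| < 1$. The function $v_r(z_1) := (1-|\phi(rz_1)w|^2)/|\alpha - \phi(rz_1)w|^2$ is continuous on $\overline{\mathbb{D}}$, since $\phi(rz_1)$ extends continuously to $\overline{\mathbb{D}}$ and $|\alpha - \phi(rz_1)w| \geq 1 - |w| > 0$; the Poisson integral formula therefore applies to $v_r$. Letting $r \to 1^-$, using $\phi(r\zeta) \to \phi^*(\zeta)$ for $m$-a.e.\ $\zeta$ together with the uniform bound $v_r \leq (1-|w|)^{-2}$ and dominated convergence, and recalling $|\phi^*(\zeta)| = 1$ a.e., I obtain
\begin{align*}
\frac{1-|\phi(z_1)w|^2}{|\alpha - \phi(z_1)w|^2} = \int_{\mathbb{T}} P_{z_1}(\zeta)\, \frac{1-|w|^2}{|\alpha - \phi^*(\zeta)w|^2}\, dm(\zeta).
\end{align*}
To combine, set $\beta = \beta(\zeta) := \alpha\overline{\phi^*(\zeta)} \in \mathbb{T}$ and note $|\beta(\zeta) - w| = |\alpha - \phi^*(\zeta)w|$, so the integrand above is $P_{z_1}(\zeta)$ times the left-hand side of the fiber identity. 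Substituting the fiber identity and swapping the sum with the $\zeta$-integral yields exactly the target, whose left-hand side is $(1-|\phi(z_1)w|^2)/|\alpha-\phi(z_1)w|^2 = (1-|\Psi(z)|^2)/|\alpha-\Psi(z)|^2$. Taking $z = 0$ shows the right-hand measure is finite, so Lemma~\ref{density} upgrades the identity from Poisson kernels to all $f \in C(\mathbb{T}^2)$.

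I expect the main obstacle to be the measurability bookkeeping and the exceptional sets, rather than the displayed computations. One must check that $\zeta \mapsto g_k(\alpha\overline{\phi^*(\zeta)})$ and the weights $|\psi'(g_k(\alpha\overline{\phi^*(\zeta)}))|^{-1}$ are measurable functions of $\zeta$, which is where the a.e.-continuity of $\psi'$ and of the $g_k$ in assumptions A and B is really used; and one must ensure that for $m$-a.e.\ $\zeta$ the value $\beta(\zeta)$ avoids the finitely many points at which $\psi$ or the $g_k$ misbehave. The latter holds because $\phi$ inner forces $\phi^*$ to equal any fixed unimodular value only on an $m$-null set (the argument recalled in Section~\ref{sec:prel}), so the fiber identity is valid for a.e.\ $\zeta$ — which is all the outer integral requires.
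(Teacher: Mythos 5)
Your proposal is correct and follows essentially the same route as the paper: regularize in the first variable via $\phi(rz_1)$, apply the Poisson integral formula and dominated convergence, rewrite the resulting boundary integrand fiberwise using the discrete Clark measure of $\psi$ at the parameter $\beta=\alpha\overline{\phi^*(\zeta)}$, and finish by Tonelli and density of Poisson kernels. One small imprecision: assumption C does not by itself guarantee finite angular derivatives at the points $g_k(\beta)$ (it excludes infinite multiplicity, which is a different matter), but this is harmless since any solution with infinite angular derivative contributes an atom of mass zero by Proposition \ref{one_var pointmass}.
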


\begin{remark}
    The assumptions A-D are most likely excessive, but we impose them here to get an easy guarantee that the right-hand side is finite and integrable. Nevertheless, we will see some interesting examples of product functions and their Clark measures for which Theorem \ref{prod_formula1} can be applied, e.g. when $\phi(z_1) = \exp \bigl(-\frac{1+z_1}{1-z_1} \bigr)$.
\end{remark}

\begin{remark}
     Observe that there exist examples of inner functions where the Clark measure $\sigma_\alpha$ is discrete for one specific $\alpha$-value but $\sigma_\beta$ is singular continuous for $\beta \in \mathbb{T} \setminus \{ \alpha \}$, and vice versa. See Example 1 and 2 in \cite{discrete}.
\end{remark}

\begin{proof}
    Define $\Psi_r(z_1,z_2) := \phi(r z_1)\psi(z_2)$ for $0<r<1$, and note that for each $z \in \mathbb{D}^2$,
    \begin{align*}
        \frac{1-|\Psi_r(z_1,z_2)|^2}{|\alpha - \Psi_r(z_1,z_2)|^2} \to \frac{1-|\Psi(z_1,z_2)|^2}{|\alpha - \Psi(z_1,z_2)|^2} = \int_{\mathbb{T}^2} P_z(\xi) d\sigma_\alpha ( \xi)
    \end{align*}
    as $r \to 1^-$. Define, for fixed $z_2 \in \mathbb{D}$ and fixed $0<r<1$, 
\begin{align*}
    u_{z_2}^r(z_1) := \frac{1- |\psi(z_2)\phi(rz_1)|^2}{|\alpha - \psi(z_2) \phi(rz_1)|^2}, \quad z_1 \in \mathbb{D}.
\end{align*}
As $\phi(rz_1)$ is continuous and satisfies $|\phi(rz_1)|<1$ on the unit circle, $u^r_{z_2}$ is continuous on $\overline{\mathbb{D}}$. Moreover, even though $\Psi_r$ is not an inner function, it holds that 
$$ \frac{1-|\Psi_r|^2}{|\alpha - \Psi_r|^2} = \Re \bigg( \frac{\alpha + \Psi_r}{\alpha- \Psi_r} \bigg)
$$
where $( \alpha + \Psi_r)/(\alpha - \Psi_r)$ is analytic on $\mathbb{D}^2$. Hence, the left-hand side is pluriharmonic in $\mathbb{D}^2$, which in turn implies that $u^r_{z_2}$ is harmonic in $\mathbb{D}$. By the Poisson integral formula, 
\begin{align*}
     \frac{1- |\psi(z_2)\phi(z_1)|^2}{|\alpha - \psi(z_2) \phi(z_1)|^2} = \lim_{r \to 1^-} u_{z_2}^r(z_1) = \lim_{r \to 1^-}\int_{\mathbb{T}} u^r_{z_2}(\zeta) P_{z_1}(\zeta) dm(\zeta).
\end{align*}
Observe that $\Re ( ( \alpha + \Psi_r(z_1,z_2))/(\alpha - \Psi_r(z_1,z_2) ) )$ is bounded for every $(z_1, z_2) \in \overline{\mathbb{D}} \times \mathbb{D}$ and every $0 < r < 1$. The dominated convergence theorem then states that we can move the limit into the integral: so, for fixed $z_2 \in \mathbb{D}$,
\begin{align}
     \frac{1- |\psi(z_2)\phi(z_1)|^2}{|\alpha - \psi(z_2) \phi(z_1)|^2} = \int_{\mathbb{T}} \lim_{r \to 1^-} u^r_{z_2}(\zeta) P_{z_1}(\zeta) dm(\zeta).
     \label{limit of u integral}
\end{align}
Moreover,
\begin{align*}
    \lim_{r \to 1^-} u^r_{z_2}(\zeta) = \lim_{r \to 1^-} \frac{1- |\psi(z_2)\phi(r \zeta)|^2}{|\alpha - \psi(z_2) \phi(r \zeta )|^2} = \frac{1- |\psi(z_2)\phi^*(\zeta)|^2}{|\alpha - \psi(z_2) \phi^*(\zeta )|^2} 
\end{align*}
for $m$-almost every $\zeta \in \mathbb{T}$. Let $E$ denote the set of points $\zeta$ such that $|\phi^*(\zeta)|=1$.  

By our assumptions, the solutions to $\psi^* = \beta$ can be parameterized by functions $g_k( \beta)$ continuous on $\mathbb{T}$ except on a finite collection of points. Since we have also assumed that the Clark measures of $\psi$ consist of point masses, by Proposition \ref{one_var pointmass}, the measure associated to any $\beta \in \mathbb{T}$ is given by $\sum_{k\geq 1} | \psi'( g_k ( \beta))|^{-1} \delta_{g_k(\beta)}$ where $|\psi'(g_k(\beta))| > 0$ for each $k$. For fixed $\zeta \in E$, this holds for $\beta = \alpha \overline{\phi^*( \zeta)}$. 

Hence, for $\zeta \in E$, we have that
\begin{align}
    \frac{1- |\psi(z_2)\phi^*(\zeta)|^2}{|\alpha - \psi(z_2) \phi^*(\zeta )|^2} 
    = \sum_{k \geq 1} \frac{1}{| \psi'(g_k(\alpha \overline{\phi^*(\zeta)}))|} P_{z_2}(g_k(\alpha \overline{\phi^*(\zeta)})).
    \label{can we integrate}
\end{align}
To apply the Poisson integral formula, we must first check that the product of the right-hand side with $P_{z_1}(\zeta)$ is integrable. Recall that by Fatou's theorem, $\phi(r \zeta)$ converges to $\phi^*(\zeta)$ as $r \to 1^-$ $m$-almost everywhere on $\mathbb{T}$ and in $L^1(\mathbb{T})$. Moreover, the curves $\{ g_k \}_{k \geq 1} $ are assumed to be continuous on the unit circle except at finitely many points. Hence, the composition $P_{z} (\zeta, g_k (\alpha \overline{\phi^*(\zeta)}))$ must be measurable --- indeed, $f(\zeta, g_k (\alpha \overline{\phi^*(\zeta)}))$ is measurable for any $f\in C(\mathbb{T}^2)$. Similarly, we see that the weights $|\psi'(g_k ( \alpha \overline{\phi^* ( \zeta)}))|$ are measurable, as $\psi$ is assumed to be continuously differentiable on $\mathbb{T}$ except at finitely many points. Since we are integrating over a compact space, this is enough to ensure integrability. 


Moreover, for fixed $\zeta \in E $, the sum $\sum_{k \geq 1} |\psi'(g_k ( \alpha \overline{\phi^* ( \zeta)}))|^{-1}$ must be finite, since the Clark measure of $\psi$ associated to the parameter value $\alpha \overline{\phi^* ( \zeta)}$ exists by assumption. As we have excluded the situation where infinitely many of the curves intersect, the weights cannot sum up to infinity as we integrate over $\mathbb{T}$. The curves could still have infinite intersections at limit points of $g_k( \alpha \overline{\phi^*(\zeta)})$, which per definition do not solve $\psi^* =  \alpha \overline{\phi^*(\zeta)}$. However, by Proposition \ref{one_var pointmass}, the weights of the Clark measures must be zero for these points.

Since equation \eqref{can we integrate} holds for $m$-almost every $\zeta \in \mathbb{T}$, the integrals of the left- and right-hand side will coincide. By combining this with \eqref{limit of u integral}, we see that
\begin{align*}
    \frac{1- |\psi(z_2)\phi(z_1)|^2}{|\alpha - \psi(z_2) \phi(z_1)|^2} &= \int_{\mathbb{T}} \frac{1- |\psi(z_2)\phi^*(\zeta)|^2}{|\alpha - \psi(z_2) \phi^*(\zeta )|^2}P_{z_1}(\zeta) dm(\zeta)  \\
    &= \int_{\mathbb{T}}\sum_{k \geq 1} \frac{1}{| \psi'(g_k(\alpha \overline{\phi^*(\zeta)}))|} P_{z}(\zeta, g_k(\alpha \overline{\phi^*(\zeta)})) dm (\zeta).
\end{align*}
As the summands are all positive, we may interchange summation and integration. Thus,
\begin{align*}
    \frac{1- |\Psi(z_1,z_2)|^2}{|\alpha - \Psi(z_1,z_2)|^2}  = \sum_{k \geq 1}  \int_{\mathbb{T}}  P_{z}(\zeta,g_k(\alpha \overline{\phi^*(\zeta)})) \frac{dm(\zeta)}{| \psi'(g_k(\alpha \overline{\phi^*(\zeta)}))|}, 
\end{align*}
i.e. 
\begin{align*}
     \int_{\mathbb{T}^2} P_z(\xi) d \sigma_\alpha ( \xi)  = \sum_{k \geq 1}  \int_{\mathbb{T}}  P_{z}(\zeta,g_k(\alpha \overline{\phi^*(\zeta)})) \frac{dm(\zeta)}{| \psi'(g_k(\alpha \overline{\phi^*(\zeta)}))|}.
\end{align*}
Since the span of Poisson kernels is dense in $C(\mathbb{T}^2)$, we may conclude that
\begin{align*}
        \int_{\mathbb{T}^2} f(\xi) d\sigma_\alpha( \xi) = \sum_{k \geq 1} \int_{\mathbb{T}} f(\zeta, g_k ( \alpha \overline{\phi^* ( \zeta)}) ) \frac{dm(\zeta)}{|\psi'(g_k ( \alpha \overline{\phi^* ( \zeta)}))|}
    \end{align*}
for all $f \in C(\mathbb{T}^2)$.
\end{proof}

Note that the weights of these measures strongly resemble their RIF counterparts from Theorem \ref{generic}. Moreover, as in the case of the multiplicative embeddings, Theorem \ref{prod_formula1} allows for infinite collections of parameterizing functions.

\begin{remark} Let us convince ourselves that there actually exist inner functions $\psi$ that meet the requirements of Theorem \ref{prod_formula1}. For example, finite Blaschke products define one such class. Let $\psi$ be a non-constant finite Blaschke product of order $n$. As in Example \ref{ex:blaschke}, this implies that $\psi$ is analytic on $\mathbb{T}$ and $\psi^*(\zeta) = \beta $ has precisely $n$ distinct solutions for each $\beta \in \mathbb{T}$, and $\psi' \neq 0$ on $\mathbb{T}$. By the Implicit Function Theorem, we may thus parameterize the solutions with functions $\{g_k( \beta) \}_{k=1}^n$ analytic on the unit circle. Additionally, we saw in Example \ref{ex:blaschke} that the Clark measures of $\psi$ are discrete for every $\beta \in \mathbb{T}$. Hence, Theorem \ref{prod_formula1} works for any product function $\Psi(z) = \phi(z_1) \psi(z_2)$ where $\psi$ is a non-constant finite Blaschke product and $\phi$ is an arbitrary inner function. 

In the case where both $\phi$ and $\psi$ are finite Blaschke products, the theorem reproduces what we know about RIFs, as
 $$\bigg| \frac{\partial \Psi}{\partial z_2}( \zeta, g_k^\alpha(\zeta)) \bigg| = |\phi(\zeta) \psi'(g_k^\alpha ( \zeta)) | = |\psi'(g_k^\alpha ( \zeta)) |$$
Then Theorem \ref{prod_formula1} reduces to Theorem \ref{generic}. 
\end{remark}

Observe that if $\psi \in C(\mathbb{T})$, it must be a finite Blaschke product (Corollary 4.2, \cite{survey}). Similarly, if $\psi' \in H^1( \mathbb{T})$, then $\psi$ is continuous on $\mathbb{T}$ (Theorem 3.11, \cite{duren}) and thus a finite Blaschke product. Hence, to be able to construct varied examples, we need $\psi^*$ to have some discontinuities on the unit circle (see e.g. Example \ref{ex: exp product}).

In what comes next, we let $g_k^\alpha(\zeta):=g_k(\alpha \overline{\phi^*(\zeta)})$ for ease of notation. 
\begin{example}
\label{ex: blaschke exp product}
Let 
\begin{align*} \Psi(z_1, z_2) := \psi (z_2) \phi(z_1) = z_2\frac{\lambda-z_2}{1-\overline{\lambda}z_2} \exp \biggl(-\frac{1+z_1}{1-z_1}\biggr)
\end{align*}
for $\phi$ as in Example \ref{exp_ex} and some constant $\lambda \in \mathbb{D}$. Note that $\Psi^* = 0$ for $\zeta_1 = 1$. The equation $\Psi^* = \alpha$ for $\alpha \in \mathbb{T}$ can be rewritten as 
\begin{align*} \zeta_2\frac{\lambda-\zeta_2}{1-\overline{\lambda}\zeta_2} = \alpha \exp \biggl(\frac{1+\zeta_1}{1-\zeta_1}\biggr).
\end{align*}
For $\alpha = e^{i \nu}$, the solutions to this are given by $\zeta_2 = g_k^\alpha(\zeta_1)$, $k=1,2$, where 
\begin{align*}
        g^\alpha_k( \zeta_1) &:= \frac{1}{2} \biggl(\lambda + \exp \biggl( i \nu + \frac{1+\zeta_1}{1-\zeta_1} \biggr) \overline{\lambda} \\ &\pm \sqrt{-4 \exp \biggl( i \nu + \frac{1+\zeta_1}{1-\zeta_1} \biggr) + \biggl(- \lambda- \exp \biggl( i \nu + \frac{1+\zeta_1}{1-\zeta_1} \biggr)  \overline{\lambda} \biggr)^2} \, \biggr).
    \end{align*}
In Figure \ref{fig:blaschke_exp}, we have plotted the level curves for certain parameter values.
\begin{figure}
    \centering
    \includegraphics[width=7cm]{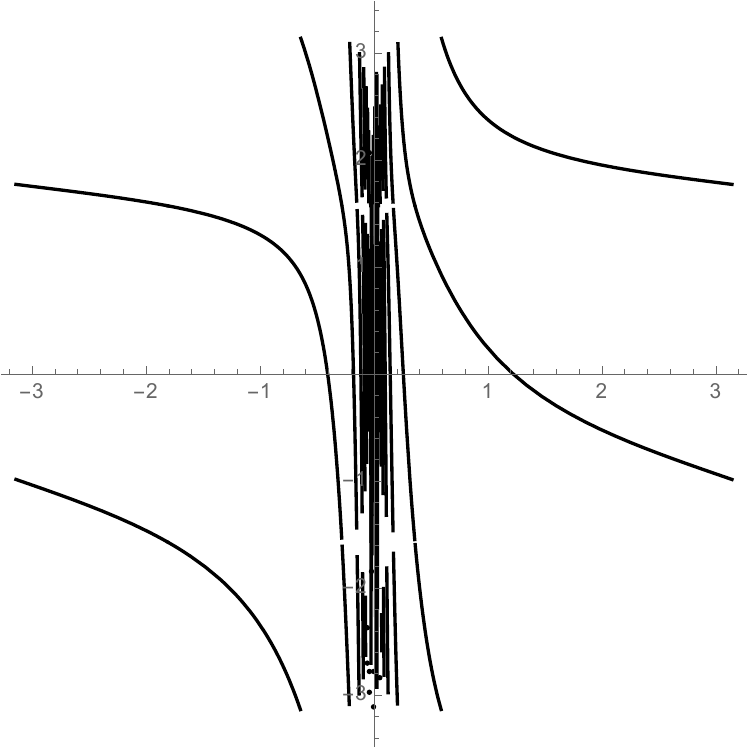}
    \caption{Level curves $g_k^\alpha$ in Example \ref{ex: blaschke exp product} for $\alpha = e^{i\pi/4}$ and $\lambda = i/2$.}
    \label{fig:blaschke_exp}
\end{figure}

Let us calculate the weights of the Clark measures. Observe that 
    $$ \psi'(z_2) = \frac{\lambda-2z_2+z_2^2\overline{\lambda}}{(1-\overline{\lambda}z_2)^2}.
    $$
    Hence, by Theorem \ref{prod_formula1}, 
    \begin{align*}
        \int_{\mathbb{T}^2} f(\xi) d \sigma_\alpha ( \xi) = \sum_{k=1}^2 \int_{\mathbb{T}} f(\zeta, g_k^\alpha( \zeta)) \frac{|1-\overline{\lambda}g_k^\alpha(\zeta)|^2}{|\lambda-2g_k^\alpha(\zeta)+g_k^\alpha(\zeta)^2\overline{\lambda}|}dm(\zeta)
    \end{align*}
    for all $f \in C(\mathbb{T}^2)$.
In Figure \ref{fig:weight_curves}, we have plotted the weights 
$$ W^{\alpha}_k(\zeta) := \frac{|1-\overline{\lambda}g_k^\alpha(\zeta)|^2}{|\lambda-2g_k^\alpha(\zeta)+g_k^\alpha(\zeta)^2\overline{\lambda}|}.
$$
\begin{figure}
    \centering
    \includegraphics[width=7cm]{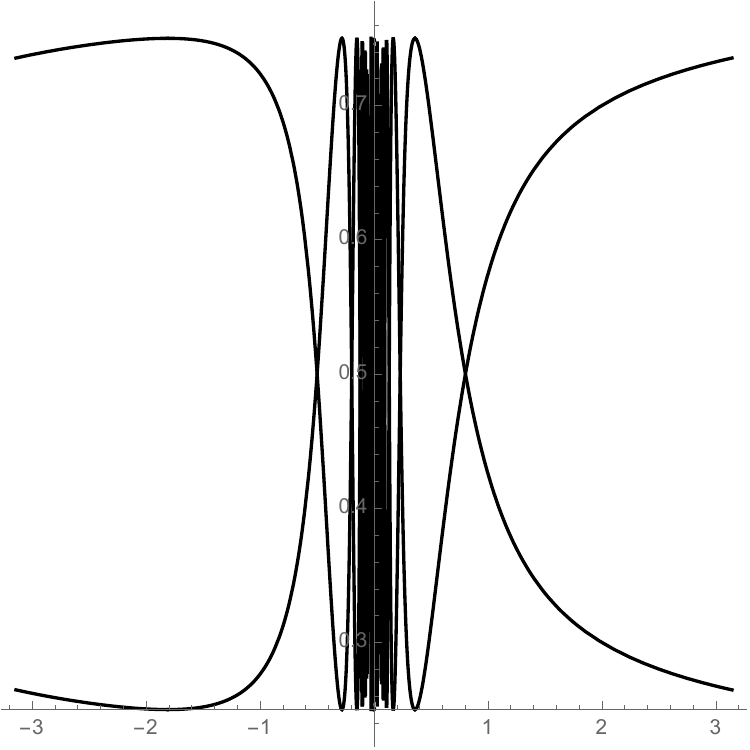}
    \caption{Weight curves $W^\alpha_k$ in Example \ref{ex: blaschke exp product} for $\alpha = e^{i\pi/4}$ and $\lambda = i/2$.}
    \label{fig:weight_curves}
\end{figure} 
\end{example}

\begin{example}
\label{ex: exp product} 
Define
$$ \Psi(z_1, z_2) := \phi(z_1) \phi(z_2) = \exp \biggl(-\frac{1+z_1}{1-z_1}\biggr) \exp \biggl(-\frac{1+z_2}{1-z_2}\biggr),
$$
where $\phi$ again is as in Example \ref{exp_ex}. As $\phi^*$ exists everywhere on $\mathbb{T}$, we have $\Psi^*(\zeta) = \phi^*(\zeta_1) \phi^*( \zeta_2)$. On the lines $\{ (1,\zeta_2) : \zeta_2 \in \mathbb{T} \} $ and $\{ (\zeta_1, 1) : \zeta_1 \in \mathbb{T} \} $ in $\mathbb{T}^2$, we see that $\Psi^* = 0$. Otherwise, $|\Psi^*| = 1$.

Since $\Psi^*$ is well-defined and unimodular on $\mathbb{T}^2$ except on the lines $\{ \zeta_1=1 \} \cup \{ \zeta_2= 1 \}$ where $\Psi^* = 0$, we need to solve the equation $\Psi = \alpha$. We may view this as 
\begin{align*}
    \exp \biggl(-\frac{1+\zeta_1}{1-\zeta_1}-\frac{1+\zeta_2}{1-\zeta_2}\biggr) = e^{i( \nu + 2 \pi k)}, \quad k \in \mathbb{Z},
\end{align*}
i.e. 
\begin{align*}
    -\frac{1+\zeta_1}{1-\zeta_1}-\frac{1+\zeta_2}{1-\zeta_2} = i(\nu + 2 \pi k), \quad k \in \mathbb{Z}.
\end{align*}
Solving for $\zeta_2$ yields
\begin{align*}
    \zeta_2 = g^\alpha_k( \zeta_1) := \frac{\nu (\zeta_1 - 1) + 2 \pi k( \zeta_1 -1) + 2 i }{\nu( \zeta_1 - 1) + 2 \pi k (\zeta_1 - 1) + 2 i \zeta_1}, \quad k \in \mathbb{Z}. 
\end{align*}
Note that functions $g^\alpha_k$ are continuous on the unit circle; their only singularities occur at points $\zeta_1 = \frac{2\pi k+\nu}{\nu + 2 \pi k + 2i }$, which do not have modulus one.

Moreover, all $g^\alpha_k$ pass through the point $(1,1) \in \mathbb{T}^2$, which does not solve $\Psi^* = \alpha$ as $\Psi^*(1,1) = 0$. However, since $\mathcal{C}_\alpha ( \Psi)$ is closed, the point $(1,1)$ nevertheless lies in the unimodular level set. Hence, 
\begin{align*}
    \mathcal{C}_\alpha ( \Psi) = \bigcup_{k \in \mathbb{Z}} \{( \zeta, g^\alpha_k(\zeta)): \zeta \in \mathbb{T} \}
\end{align*}
where $g^\alpha_k$ is analytic on $\mathbb{T}$ for every $k$. We have plotted some of these curves in Figure \ref{fig:levelcurves_expexp}.
\begin{figure}
    \centering
    \includegraphics[width=7cm]{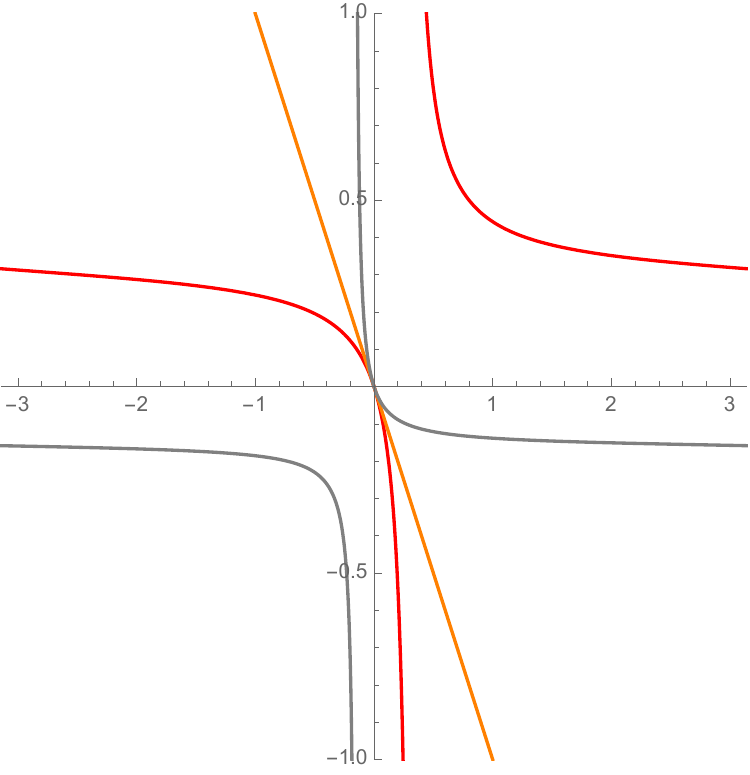}
    \caption{Level curves $g^1_k$ in Example \ref{ex: exp product} for $k=-1$ (red), $k=0$ (orange) and $k=2$ (gray).}
    \label{fig:levelcurves_expexp}
\end{figure}

Recall that by Lemma \ref{parameter}, the unimodular level sets of RIFs can be parameterized by graphs that are analytic on $\mathbb{T}^2$ except possibly at a single point. One might then expect that the Clark measures of a product function which is rational in at least one variable, like in Example \ref{ex: blaschke exp product}, would be supported on smoother curves than this $\Psi$. However, we see that in this case, the unimodular level sets are actually parameterized by much more “well-behaved” curves than in our previous example. 

At first sight, $\Psi$ does not seem to meet the requirements of Theorem \ref{prod_formula1}; there is a point on $\mathbb{T}^2$ where all $g_k$ intersect, as $g_k(1) = 1 $ for all $k \in \mathbb{Z}$. However, as noted above, this value does not in fact solve the equation $\Psi^* = \alpha$ since $\phi^*(1) = 0$. This point would cause a problem if the Clark measure of $\phi$ had positive weight there. Fortunately, we are saved by Proposition \ref{one_var pointmass}; the measure associated to $\alpha$ has a point mass at $1$ if and only if $\phi^* (1) = \alpha$, and so $|\phi'(1)|^{-1} = 0$. 

Let us now calculate the weights of the Clark measures associated to $\Psi$. First note that 
\begin{align*}
    \phi'(z_2) = - \frac{2 \exp \bigr(- \frac{1+z_2}{1-z_2} \bigl) }{(1-z_2)^2} = -\frac{2 \phi(z_2)}{(1-z_2)^2}.
\end{align*}
Then
\begin{align*}
    \phi'(g_k^\alpha(\zeta_1)) = -\frac{2 \alpha}{\phi(\zeta_1) (1-g_k^\alpha(\zeta_1))^2} = \frac{2 \alpha}{\phi(\zeta_1)} \frac{(\nu (\zeta_1 - 1)+ 2 \pi k( \zeta_1 - 1) + 2i\zeta_1)^2}{4(\zeta_1-1)^2}
\end{align*}
for $\zeta_1 \in \mathbb{T} \setminus \{ 1 \}$. When taking moduli, we find 
\begin{align*}
    |\phi'(g_k^\alpha(\zeta_1))| =  \bigg| \frac{\nu (\zeta_1 - 1) + 2 \pi k (\zeta_1 - 1) + 2i\zeta_1}{2(\zeta_1-1)} \bigg|^2
\end{align*}
for $\zeta_1 \in \mathbb{T} \setminus \{ 1 \}$. Hence, Theorem \ref{prod_formula1} yields
\begin{align*}
    \int_{\mathbb{T}^2} f( \xi) d \sigma_\alpha ( \xi ) = \sum_{k \in \mathbb{Z}}\int_{\mathbb{T}} f( \zeta, g_k^\alpha(\zeta))  \bigg| \frac{2(\zeta-1)}{\nu (\zeta - 1) + 2 \pi k (\zeta - 1) + 2i\zeta} \bigg|^2 dm(\zeta)
    \label{integral_exp}
\end{align*}
for all $f \in C(\mathbb{T}^2)$, where $\alpha = e^{i \nu}$. Since $\sum_{k \in \mathbb{Z}} \frac{1}{k^2}$ converges, we see that the right-hand side is finite. 

Note that the weights 
$$ W^\alpha_k(\zeta) := \bigg| \frac{2(\zeta-1)}{\nu (\zeta - 1) + 2 \pi k (\zeta - 1) + 2i\zeta} \bigg|^2 
$$
reduce to zero for $\zeta = 1$, as expected. Moreover, we established earlier that all the level curves pass through the singularity $(1,1)$. Based on this example, it seems that the weights “detect” the singularities of $\Psi$ --- much like in the case of the rational inner functions in Section \ref{sec:rifs}. Recall our brief discussion on the connection between the order of vanishing of weights at RIF singularities and contact order on page 9. It might be interesting to study if the singularities of general product functions are connected to the density of their Clark measures in some similar way.

\begin{figure}
    \centering
    \includegraphics[width=7cm]{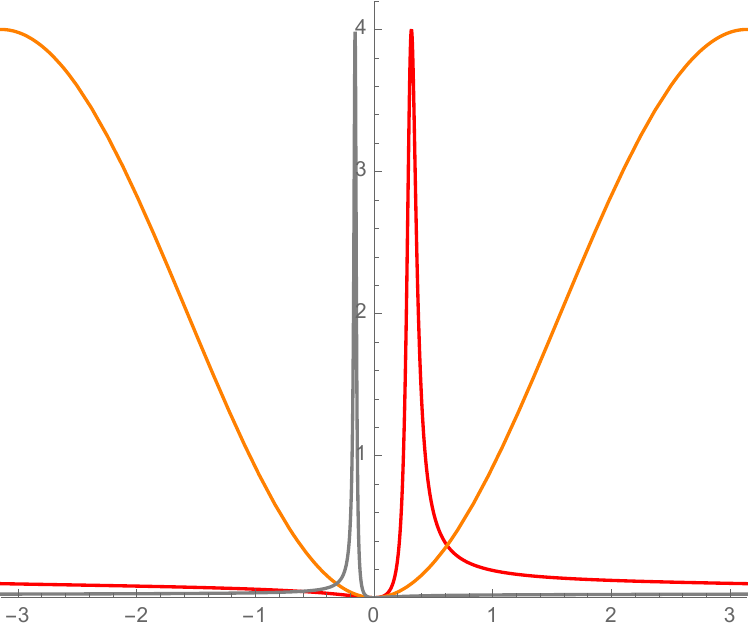}
    \caption{Weight curves $W_k^1$ in Example \ref{ex: exp product} for $k=-1$ (red), $k=0$ (orange) and $k=2$ (gray).}
    \label{fig:weight_curves2}
\end{figure} 
\end{example}

    

\section{Closing remarks}
\label{sec: close}
    It is important to note that Clark measures of general bivariate inner functions still remain unexplored. In one variable, any singular probability measure on $\mathbb{T}$ defines the Clark measure of some inner function (pp. 234-235, \cite{Intro}). In several variables, we need added requirements on a measure for it to be a Clark measure --- as discussed in Remark \ref{skeptic}, any positive, pluriharmonic, singular probability measure defines the Clark measure of some inner function. The distinction arises from the fact that in several variables, it is not as easy to ensure that a given harmonic function is the real part of an analytic function. By Theorem 2.4.1 in \cite{Rudin}, the Poisson integral of a real measure $\mu$ on $\mathbb{T}^d$ is given by the real part of an analytic function if and only if its Fourier coefficients satisfy $\hat{\mu}(k) = 0$ for every $k$ outside the set $-\mathbb{Z}_{+}^d \cup \mathbb{Z}_{+}^d$, where $-\mathbb{Z}_{+}^d$ denotes the set of points $(k_1, \ldots, k_d) $ where every $k_j \leq 0$. 

    Furthermore, the kind of smooth curve-parameterizations that were obtained for the classes of inner functions in this text are certainly not applicable for general inner functions. What we do know is that RP-measures cannot be supported on sets of Hausdorff dimension less than one (Theorem 4, \cite{bergqvist}). In two dimensions, we have seen examples of Clark measures supported on curves (i.e. sets of Hausdorff dimension one). In \cite{mcdonald}, the author constructs an RP-measure whose support has Hausdorff dimension two. However, it is not clear to the author how one would construct an RP-measure with support of Hausdorff dimension $1<d<2$. For an in-depth discussion about the supports of RP-measures, see \cite{bergqvist}. 

    We end with a brief note on Clark embedding operators associated to the classes of inner functions introduced here. In Example 4.2 in \cite{Doubtsov}, it is shown that all $T_\alpha$ are unitary for the simple multiplicative embedding $\phi(z_1 z_2) = z_1 z_2$ where $\phi(z) = z$, for which the Clark measure $\sigma_\alpha$ satisfies
    \begin{align*}
        \int_{\mathbb{T}^2} f(\xi) d \sigma_\alpha ( \xi ) = \int_\mathbb{T} f( \zeta, \alpha \overline{\zeta}) dm(\zeta), \quad f \in C(\mathbb{T}^2).
    \end{align*}
    For holomorphic monomials $f$, the functions $f(\zeta, \alpha \overline{\zeta})$ are dense in $L^2(m)$, which in turn implies that $A(\mathbb{D}^2)$ is dense in $L^2(\sigma_\alpha)$, as desired. It seems plausible that a similar argument can be applied to show that given any $\Phi(z)$ satisfying the conditions of Corollary \ref{d-dim cor}, the associated Clark embedding operators are all unitary. In the case of product functions, however, it is not so clear when the operators would be unitary and further analysis is required. 
    
    
\section*{Acknowledgements}
The author would like to express her deepest gratitude to Alan Sola, for insightful comments and expert advice. 

This material has been adapted from the author's Master's thesis in mathematics at Stockholm University in August 2023. 



\end{document}